\newcommand{\supp}{\text {\rm supp}}
\newcommand{\Ad}{\text {\rm Ad}}
\newcommand{\Hom}{{\rm Hom}}
\def\i{^{-1}}
\def\ge{\geqslant}
\def\le{\leqslant}
\def\<{\langle}
\def\>{\rangle}
\def\lto{\hookrightarrow}
\def\a{\alpha}
\def\b{\beta}
\def\G{\Gamma}
\def\d{\delta}
\def\o{\omega}
\def\s{\sigma}
\def\t{\tau}
\def\l{\lambda}
\def\Om{\Omega}
\def\ZZ{\mathbb Z}
\def\NN{\mathbb N}
\def\QQ{\mathbb Q}
\def\FF{\mathbb F}
\def\RR{\mathbb R}
\def\CC{\mathbb C}
\def\kk{\mathbf k}
\def\aff{\text aff}
\def\tch{\tilde \ch}
\def\ch{\mathcal H}
\def\co{\mathcal O}
\def\tu{\tilde u}
\def\tx{\tilde x}
\def\ty{\tilde y}
\def\tW{\tilde W}
\def\tw{\tilde w}
\def\fR{\mathfrak R}
\def\fH{\mathfrak H}
\def\subset{\subseteq}
\theoremstyle{plain}
\newtheorem{thm}{Theorem}[section]
\newtheorem*{thm*}{Theorem}
 \newtheorem{prop}[thm]{Proposition}
 \newtheorem{lem}[thm]{Lemma}
 \newtheorem{cor}[thm]{Corollary}
\theoremstyle{definition}
\theoremstyle{remark}
\newtheorem*{rmk}{Remark}
\newtheorem*{claim*}{Claim}
\begin{document}

\author{Xuhua He}
\address{Department of Mathematics, University of Maryland, College Park, MD 20742, USA and department of Mathematics, HKUST, Hong Kong}
\email{xuhuahe@math.umd.edu}
\author{Sian Nie}
\address{Institute of Mathematics, Academy of Mathematics and Systems Science, Chinese Academy of Sciences, 100190, Beijing, China}
\email{niesian@amss.ac.cn}
\title{Cocenters and representations of affine $0$-Hecke algebras}
\keywords{affine Coxeter groups, $0$-Hecke algebras, Conjugacy classes}
%\subjclass[2000]{14L05, 20G25}

\begin{abstract}
In this paper, we study the relation between the cocenter $\overline{\tilde \ch_0}$ and the finite dimensional representations of an affine $0$-Hecke algebra $\tilde \ch_0$. As a consequence, we obtain a new criterion on the supersingular modules: a (virtual) module of $\tilde \ch_0$ is supersingular if and only if its character vanishes on the non-supersingular part of $\overline{\tilde \ch_0}$.
\end{abstract}

\maketitle

\section*{Introduction}

\subsection{} Extended affine Hecke algebras $\tilde \ch_q$ are deformations of the group algebras of extended affine Weyl groups $\tW$ (with the parameter functioin $q$). They play an important role in the study of representations of $p$-adic groups $G$.

For complex representations, Borel correspondence relates the representation of $G$ with Iwahori fixed points to representations of $\tilde \ch_q$, where $q$ is a power of the prime number $p$.

For representations in characteristic $p$ (the defining characteristic), Vign\'eras \cite{V05} relates the representations of $G$ with representations of affine $0$-Hecke algebras $\tilde \ch_0$ and its generalization, pro-$p$ Iwahori-Hecke algebras.

\subsection{} By the work of Kazhdan-Lusztig \cite{KL} and Reeder \cite{R}, the simple modules of $\tilde \ch_q$ (for $q$ nonzero and not a root of unity) are parameterized by the triple $(s, u, \phi)$, where $s$ is a semisimple element in the dual group $G^\vee$, $u$ is a unipotent element in $G^\vee$ with $s u s \i=u^q$ and $\phi$ is a local system of Springer type.

The classification of simple modules of $\tilde \ch_0$, on the other hand, looks quite different. Abe \cite{A}  gave a classification of mod-$p$ representations in terms of parabolic inductions of simple supersingular modules. Ollivier \cite{O13} and Vign\'eras \cite{V14} classified all simple  supersingular modules in terms of supersingular characters. The proof uses Bernstein presentation \cite{V14-1} and Satake-type isomorphism \cite{V14-2}.

\subsection{} In this paper, we study the cocenter $\overline{\tch_0}$ of $\tch_0$ and the trace map $Tr: \overline{\tch_0} \to R(\tilde \ch_0)_\kk^*$ induced from the natural trace pairing between the cocenter $\overline \tch_0$ and the Grothendieck group $R(\tilde \ch_0)_\kk$ of finite dimensional representations of $\tilde \ch_0$ over an arbitrary algebraically closed field $\kk$. We then use the trace map to give a basis of $R(\tilde \ch_0)$. As a consequence, we give a new proof of the classification of simple supersingular modules of $\tilde \ch_0$. %\footnote{The same approach also works for pro-$p$ Hecke algebras. We do not go into details here.}

\

In the rest of the introduction, we explain our main results and compare them with results for $\tilde \ch_q$. For simplicity, we only state the results for the case that $\tW$ is an affine Weyl group. In the body of the paper, we tackle the general case.

\subsection{} The affine $0$-Hecke algebra $\tch_0$ has a standard $\ZZ$-basis $\{T_{\tw}; \tw \in \tW\}$ subject to quadratic relations and braid relations. For the cocenter $\overline{\tilde \ch_0}$, we have the following basis Theorem.

\begin{thm}\label{basis}
The set $\{T_{\Sigma}; \Sigma \in \text{Cyc}(\tW_{\min})\}$ forms a $\ZZ$-basis of $\overline{\tilde \ch_0}$.
\end{thm}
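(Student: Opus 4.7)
\emph{Proof proposal.}
The plan is to separate the theorem into a spanning statement and a linear independence statement. Spanning will be a length induction using the theory of minimal length elements and cyclic shifts in extended affine Weyl groups; linear independence, where I expect the main obstacle to lie, would be handled via the trace pairing with suitable finite dimensional modules. For spanning, induct on $\ell(\tw)$. If $\tw$ is not minimal in its conjugacy class, the minimal length theorem for extended affine Weyl groups (He, He-Nie) provides a simple reflection $s$ with $s\tw<\tw$ such that the cyclic shift $\tw\mapsto s\tw s$ either preserves length or strictly decreases it. In the $0$-Hecke algebra this translates into
\[
T_{\tw} \;=\; T_s T_{s\tw} \;\equiv\; T_{s\tw} T_s \;=\; \begin{cases} T_{s\tw s}, & s\tw s > s\tw, \\ -T_{s\tw}, & s\tw s < s\tw, \end{cases}
\]
modulo $[\tch_0,\tch_0]$. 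The first alternative moves inside a cyclic shift class at constant length; the second reduces to a strictly shorter element (in a possibly different conjugacy class), which lies in the $\ZZ$-span of the $T_\Sigma$ by induction. Iterating, every $T_{\tw}$ becomes an integer combination of $T_v$'s with $v\in\tW_{\min}$; since length-preserving cyclic shifts further identify $T_v$ with $T_{v'}$ when $v,v'$ belong to the same cyclic shift class, this refines to an integer combination of the $T_\Sigma$.

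Linear independence of $\{T_\Sigma:\Sigma\in\Cyc(\tW_{\min})\}$ is the core difficulty. My strategy is to construct, for each $\Sigma$, a finite dimensional (virtual) $\tch_0$-module $M_\Sigma$ over $\kk$ whose character pairs triangularly with the candidate basis under an ordering that first compares $\ell$ and then uses a combinatorial refinement separating cyclic shift classes within a single conjugacy class. Natural candidates for the $M_\Sigma$ are the parabolically induced modules attached to the Levi datum of $\Sigma$, in the spirit of the supersingular constructions of Ollivier and Vign\'eras. Arranging the triangular matrix to be unipotent over $\ZZ$ is the delicate point; one cannot simply degenerate from generic $q$, since linear independence of the analogous family need not survive specialization at $q=0$.

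As a plausibility check on the indexing by cyclic shift classes rather than full conjugacy classes: at generic parameter the cocenter basis of $\tch_q$ is indexed by conjugacy classes of $\tW$, because invertibility of $T_s$ promotes length preserving cyclic shifts to genuine conjugation $T_{\tw}\mapsto T_s T_{\tw} T_s\i$, collapsing the cyclic shift classes of $\tW_{\min}$ inside a conjugacy class. At $q=0$ this invertibility is lost, so the cocenter only sees cyclic shift equivalence; this is precisely why $\Cyc(\tW_{\min})$, rather than the set of conjugacy classes, appears in the statement.
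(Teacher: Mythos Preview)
Your spanning argument is correct and matches the paper (this is Proposition~\ref{tw-sigma}). The linear independence strategy, however, cannot work: the trace map $Tr:\overline{\tch_0}\to R(\tch_0)_\kk^*$ is \emph{not injective} at $q=0$, as the paper itself points out, so no family of finite-dimensional modules can separate all the $T_\Sigma$. Concretely, Corollary~\ref{trace} shows that any two elements of $\tW_{\min}$ sharing a standard pair have identical trace on every finite-dimensional module, and the remark following it observes that this occurs across distinct conjugacy classes, hence across distinct cyclic-shift classes. Your proposed triangular trace matrix will therefore have repeated rows, and no refinement of the ordering rescues it. This is precisely the phenomenon your final paragraph anticipates: at $q=0$ the cocenter is genuinely finer than what characters can detect, so the very feature that forces the indexing set to be $\Cyc(\tW_{\min})$ rather than conjugacy classes is also what blocks a trace-pairing proof of independence.

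The paper's linear independence argument is purely combinatorial and avoids representations entirely. It defines a $\ZZ$-linear map $\psi:\tch_0\to M$ (with $M$ free on $\Cyc(\tW_{\min})$) by $T_{\tw}\mapsto(-1)^{\ell(\tw)-\ell(\Sigma_{\tw})}[\Sigma_{\tw}]$, where $\Sigma_{\tw}$ is the unique $\preceq$-maximal cyclic-shift class lying Bruhat-below $\tw$ (Proposition~\ref{Sigma}). A direct case analysis on the relative lengths of $\tw$, $s\tw$, $\tw s$, $s\tw s$ shows $\psi$ annihilates each $[T_{\tw},T_s]$, and Lemma~\ref{tau-w} handles $[T_{\tw},T_\t]$ for $\t\in\Omega$; since such commutators generate $[\tch_0,\tch_0]$ (Lemma~\ref{red-lem}), $\psi$ descends to the cocenter and furnishes an explicit left inverse to $[\Sigma]\mapsto T_\Sigma$. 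The ingredient your proposal is missing is this combinatorial assignment $\tw\mapsto\Sigma_{\tw}$ and its compatibility with cyclic shifts (Proposition~\ref{Sigma}(2)).
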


Here $\tW_{\min}$ is the set of elements in the affine Weyl group $\tW$ that are of minimal lengths in their conjugacy classes and $\text{Cyc}(\tW_{\min})$ is the set of cyclic-shift classes in $\tW_{\min}$ (defined in \S \ref{2.1}). The element $T_{\Sigma}$ is the image of $T_{\tw}$ in $\overline{\tilde \ch_0}$ for some, or equivalently, any $\tw \in \Sigma$.

This result is obtained using some nice properties of $\tW_{\min}$ established in \cite{HN} and an idea in \cite{He15} for finite $0$-Hecke algebras.

It is interesting to compare the cocenter of $\tch_0$ with that of $\tilde \ch_q$ for $q \neq 0$. For the latter one, a similar result is obtained in \cite{HN} (for equal parameter case) and \cite{CH} (for general case). For $\tilde \ch_q$, the cocenter has a basis indexed by the set of ``strongly conjugacy classes'' of $\tW_{\min}$, which is in natural bijection with the set of conjugacy classes of $\tW$.

\subsection{} Now we move to the trace map $Tr: \overline{\tch_0} \to R(\tilde \ch_0)_\kk^*$ and discuss its application on representations of $\tch_0$.

Using parabolic induction and the basis Theorem for the cocenter, we can essentially reduce the study of the trace map to the study of the trace map for the 0-Hecke algebras of parahoric subgroups. Notice that the 0-Hecke algebra of a parahoric subgroup is a finite $0$-Hecke algebra, whose simple modules have been classified in \cite{No}. We have

\begin{thm} \label{main'}
The set $\{\pi_{J, \G, \chi}; (J, \G) \in \aleph/\sim, \chi \in \Omega_J(\G)^\vee\}$ is a $\ZZ$-basis of $R(\tilde \ch_0)_\kk$.
\end{thm}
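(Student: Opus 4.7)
The plan is to establish the theorem by showing (i) that the proposed set spans $R(\tilde \ch_0)_\kk$ and (ii) that it is linearly independent, with linear independence obtained through a unitriangular pairing against a suitable subset of the cocenter basis from Theorem \ref{basis}. The underlying idea is to play the representation side (indexed by triples $(J,\Gamma,\chi)$) against the cocenter side (indexed by cyclic-shift classes in $\tW_{\min}$) via the trace map $Tr$.

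For spanning, I would start from Abe's classification \cite{A}, which presents every simple $\tch_0$-module as (a quotient of) a parabolic induction of a simple supersingular module over a Levi-type subalgebra, and the supersingular modules of the latter are classified by Ollivier--Vign\'eras \cite{O13,V14}. Combining this with the fact that a simple module of the finite $0$-Hecke algebra of a parahoric subgroup is, by Norton \cite{No}, determined by a character of the corresponding Levi, one sees that each $\pi_{J,\Gamma,\chi}$ is naturally built by inducing a Norton-type simple module up to $\tch_0$, and that every simple $\tch_0$-module appears (up to composition factors) among such inductions. So the set exhausts $R(\tilde \ch_0)_\kk$ as a $\ZZ$-module after taking suitable integer combinations.

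The heart of the proof will be linear independence. For each triple $(J,\Gamma,\chi)$ I would produce a distinguished cyclic-shift class $\Sigma_{J,\Gamma,\chi} \in \Cyc(\tW_{\min})$ such that the pairing matrix $\bigl(Tr(T_{\Sigma_{J',\Gamma',\chi'}})(\pi_{J,\Gamma,\chi})\bigr)$ is unitriangular in an appropriate partial order on $\aleph/\sim$ refined by the characters $\chi$. The natural candidate for $\Sigma_{J,\Gamma,\chi}$ is the cyclic-shift class of a length-zero (or more generally, elliptic-in-the-Levi) element supported on the data $(J,\Gamma)$; its trace on $\pi_{J',\Gamma',\chi'}$ can be computed by Frobenius reciprocity, reducing to a trace computation in the finite $0$-Hecke algebra where Norton's description makes it explicit. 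The vanishing off the diagonal will come from the support/parabolic considerations: a class supported on $(J,\Gamma)$ cannot contribute to an induction from strictly ``smaller'' data, while the diagonal contribution reduces to pairing $\chi$ with itself in the character group $\Omega_J(\Gamma)^\vee$.

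The main obstacle, I expect, is precisely the bookkeeping on the diagonal: one must match the cardinalities $|\aleph/\sim|\cdot|\Omega_J(\Gamma)^\vee|$ on both sides (which, together with Theorem \ref{basis}, forces the pairing to be perfect once it is triangular), and one must verify that the chosen $T_{\Sigma_{J,\Gamma,\chi}}$'s really separate the characters $\chi$. This character-separation step is where one uses the Satake-type isomorphism of Vign\'eras \cite{V14-2}, since the characters $\chi\in\Omega_J(\Gamma)^\vee$ are exactly the supersingular characters of the relevant commutative subalgebra, and evaluating $Tr(T_\Sigma)$ against a parabolically induced supersingular character recovers $\chi$ itself on the commutative part. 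Once triangularity and nondegeneracy on the diagonal are in place, linear independence (and hence the basis property) follows, and the proof is complete.
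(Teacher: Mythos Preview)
Your proposal has genuine gaps on both fronts.

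\textbf{Spanning.} You invoke Abe's classification of simple modules and the Ollivier--Vign\'eras classification of supersingular modules as \emph{input}. But the paper's explicit purpose (see the introduction) is to derive those classifications as \emph{consequences} of Theorem~\ref{main'}; using them to prove it is circular in this context. Even setting that aside, your argument is incomplete: knowing that every simple module is a subquotient of a parabolically induced supersingular module does not, by itself, show that each simple module is a $\ZZ$-linear combination of the specific induced modules $\pi_{J,\Gamma,\chi}$. You would still need to express each simple in the Grothendieck group in terms of these particular inductions, and that is precisely the content of the theorem. The paper instead proves spanning internally: for any $M$, it uses the character formula (Theorem~\ref{char}) and Lemma~\ref{4.1} to read off the $\Omega_J(\Gamma)$-module $M_{J,\Gamma}$ at each minimal $(J,\Gamma)\in\aleph(M)$, subtracts the corresponding combination of $\pi_{J,\Gamma,\chi}$'s, and inducts on the partial order on $\aleph/\sim$.

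\textbf{Linear independence.} Your unitriangularity idea is sound and is in fact what the paper does, but two ingredients you rely on are wrong. First, the cardinality-matching step fails: both $\Cyc(\tW_{\min})$ and the index set $\{(J,\Gamma,\chi)\}$ are infinite (note $\Omega_J(\Gamma)$ typically has infinite character group over $\kk$), and the paper explicitly remarks (\S1.4) that the trace map $\overline{\tilde\ch_0}\to R(\tilde\ch_0)_\kk^*$ is \emph{not} injective, so there is no perfect pairing to force. Second, the Satake-type isomorphism is unnecessary: once the triangularity of Theorem~\ref{char} isolates a single minimal $(J_1,\Gamma_1)$, the characters $\chi\in\Omega_{J_1}(\Gamma_1)^\vee$ are separated by evaluating at elements $x\in\Omega_{J_1}(\Gamma_1)_+$ and invoking Dedekind's lemma on linear independence of characters---no further machinery is needed.
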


Here $\pi_{J, \G, \chi}$ is, roughly speaking, an $\tch_0$-module induced from certain simple module of the parabolic subalgebra $\tilde \ch_{J, 0}^+$, which is indexed by the character $\chi$ and the parahoric subalgebra of $\tilde \ch_{J, 0}^+$ of type $\G$. We refer to \S \ref{constr} for the precise definition.

\subsection{}
By combining Theorem \ref{main'} with the character formula (Theorem \ref{char}), we obtain in Proposition \ref{5.4} a new proof of the classification of simple supersingular modules. We also obtain the following criterion of supersingular modules.

\begin{thm}\label{ss}
An element $\pi \in R(\tilde \ch_0)_\kk$ is supersingular if and only if $Tr(h, \pi)=0$ for all $h \in \overline{\tilde \ch_0}^{nss}$.
\end{thm}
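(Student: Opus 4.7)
The plan is to exhibit the trace pairing as block-diagonal with respect to a supersingular versus non-supersingular decomposition of both sides, and then to reduce non-degeneracy on each block to the finite-parahoric case.

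\textbf{Decomposing both sides.} The basis of $\overline{\tch_0}$ from Theorem \ref{basis} splits as $\{T_\Sigma\}_{\Sigma\ \text{ss}} \sqcup \{T_\Sigma\}_{\Sigma\ \text{nss}}$, where a non-supersingular class is one whose minimal-length representatives lie in some proper parabolic $\tW_J$; this gives the decomposition $\overline{\tch_0} = \overline{\tch_0}^{ss} \oplus \overline{\tch_0}^{nss}$ already implicit in the statement. Dually, the basis of $R(\tch_0)_\kk$ from Theorem \ref{main'} splits according to whether the parameter $J$ is all of $S$ (the supersingular part) or $J$ is proper (the non-supersingular part, obtained by parabolic induction from $\tch_{J,0}^+$).

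\textbf{The easy direction.} To prove $Tr(h,\pi)=0$ when $h \in \overline{\tch_0}^{nss}$ and $\pi$ is supersingular, I would first note that each generator $T_\Sigma$ of $\overline{\tch_0}^{nss}$ can, by the very definition of an nss class, be represented by $T_{\tw}$ for some $\tw$ in a proper parabolic $\tW_J$. Consequently $T_\Sigma$ lies in the image of the natural map $\overline{\tch_{J,0}^+} \to \overline{\tch_0}$. A Frobenius-reciprocity computation then rewrites $Tr(T_\Sigma,\pi)$ as the trace of the corresponding class in $\overline{\tch_{J,0}^+}$ against the parabolic restriction of $\pi$, which vanishes since $\pi$ is supersingular.

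\textbf{The hard direction.} Conversely, suppose $Tr(h,\pi) = 0$ for every $h \in \overline{\tch_0}^{nss}$. Decompose $\pi = \pi^{ss} + \pi^{nss}$ along the split of Theorem \ref{main'}. By the previous paragraph, $Tr(h,\pi^{ss})=0$ automatically on $\overline{\tch_0}^{nss}$, so the hypothesis forces $Tr(h,\pi^{nss}) = 0$ for all $h \in \overline{\tch_0}^{nss}$. Applying the character formula (Theorem \ref{char}) to each $\pi_{J,\Gamma,\chi}$ with $J$ proper expresses $Tr(T_\Sigma,\pi_{J,\Gamma,\chi})$ explicitly as a character value in the finite $0$-Hecke algebra of the parahoric subalgebra of type $J$ paired against an appropriate cocenter class $\Sigma'$. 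Norton's classification for finite $0$-Hecke algebras, together with the known non-degeneracy of the trace pairing in that setting, then pins down $\pi^{nss}=0$, proving that $\pi = \pi^{ss}$ is supersingular.

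\textbf{Main obstacle.} The technical heart is the non-degeneracy of the character matrix $\bigl(Tr(T_\Sigma,\pi_{J,\Gamma,\chi})\bigr)$ restricted to the non-supersingular block. This rests on two ingredients: (i) a precise identification of $\overline{\tch_0}^{nss}$ with a quotient of $\bigoplus_{J \subsetneq S} \overline{\tch_{J,0}^+}$ compatible with the basis of Theorem \ref{basis}, and (ii) a bijection between nss cyclic-shift classes in $\tW_{\min}$ and nss parameter data $(J,\Gamma,\chi)$ that makes the character matrix (block-)triangular. Both ingredients should follow by induction on the rank of $\tW$ combined with the parabolic induction framework underlying Theorem \ref{main'}, but verifying the exact triangular shape of the character matrix --- and hence its invertibility --- is where the real work lies.
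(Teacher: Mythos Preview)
Your proposal rests on a misreading of both the cocenter decomposition and the supersingular condition, and it omits the involution $\iota$, which is the key structural ingredient.

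First, $\overline{\tch_0}^{nss}$ is \emph{not} spanned by the $T_\Sigma$ whose classes lie in proper parabolic (or parahoric) subgroups. In the paper's precise formulation (Proposition~\ref{5.4}), $\overline{\tch_0}^{nss}=\overline{\tch_0}^{nrig}+\iota(\overline{\tch_0}^{nrig})$, where the non-rigid part is spanned by $T_\Sigma$ with $J_{\nu_\Sigma}\subsetneq F_0$, i.e.\ with \emph{non-central Newton point}; in the simplified setting of the introduction these are exactly the $\Sigma$ \emph{not} contained in any proper parahoric --- the opposite of what you wrote. Moreover you conflate the Levi-type subgroups $\tW_J=X\rtimes W_J$ (which contain all translations, so almost every $\Sigma$ meets some $\tW_J$) with parahoric subgroups $W_K$ for $K\subset S_{\aff}$; these are different objects, and your Frobenius-reciprocity step via $\tch_{J,0}^+$ does not apply to the actual generators of $\overline{\tch_0}^{nss}$. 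Finally, ``supersingular'' is defined by $E_{\tw}M=0$ for $\ell(\tw)\gg 0$, not by vanishing of a parabolic restriction; equating the two is part of what must be proved.

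Second, on the representation side, the modules $\pi_{J,\G,\chi}$ with $J=F_0$ are the \emph{rigid} modules (Proposition~\ref{rig}), not the supersingular ones. Supersingularity cuts out the strictly smaller set where in addition $W_{S_{\aff}\smallsetminus\G}$ is finite. The passage from ``rigid'' to ``supersingular'' is precisely where $\iota$ enters: condition~(2) of Proposition~\ref{5.4} says that both $M$ and ${}^\iota M$ are rigid, and since ${}^\iota\pi_{F_0,\G,\chi}=\pi_{F_0,\,S_{\aff}\smallsetminus\G,\,\chi}$, rigidity of both forces the extra finiteness condition on $S_{\aff}\smallsetminus\G$. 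Your block decomposition misses this mechanism, so even if the non-degeneracy you aim for held, it would at best characterize rigid modules rather than supersingular ones. The paper's actual route is (1)$\Rightarrow$(2) via standard pairs and Proposition~\ref{power}, (2)$\Rightarrow$(3) by applying Proposition~\ref{rig} to $M$ and to ${}^\iota M$, and (3)$\Rightarrow$(1) via the explicit control of $E_{\tw}$ in terms of $T_z$ and ${}^\iota T_z$ from Corollary~\ref{bound1}.
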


Here $\overline{\tilde \ch_0}^{nss}$ is the non-supersingular part of the cocenter, defined as the subspace of $\overline{\tilde \ch_0}$ spanned by $T_{\Sigma}$ and ${}^{\iota} T_{\Sigma}$, where $\Sigma$ is not contained in any proper parahoric group of $\tW$ and $\iota$ is an involution of $\tilde \ch_0$ defined in \S \ref{1.3}.

%Let $\overline{\tilde \ch_0}^{ss}$ be the supersingular cocenter, the subspace of $\overline{\tilde \ch_0}$ spanned by $T_{\Sigma}$, where $\Sigma \neq \{1\}$ and is contained in some proper parahoric subalgebra of $\tilde \ch_0$. Then $\overline{\tilde \ch_0}=\overline{\tilde \ch_0}^{ss} \oplus \overline{\tilde \ch_0}^{nss}$. We may define the supersingular character table for $\tilde \ch_0$.

%As observed in \cite{He15} for finite $0$-Hecke algebras, the characters does not distinguish different elements with the same support. Similar phenomenon happens for affine $0$-Hecke algebras. In particular, the trace map $tr: \overline{\tilde \ch_0} \to R(\tilde \ch_0)_\kk^*$ is never injective. Based on this observation, we introduce the standard pairs in \S \ref{standard}. The standard pairs associated to the supersingular cocenters are exactly the nontrivial subsets of the set of affine simple reflections.

%The supersingular character table induces a perfect pairing between the nontrivial subsets of the set of affine simple reflections and the set of supersingular modules of $\tilde \ch_0$. By labelling the subsets and modules in a suitable, the pairing becomes an upper triangular invertible matrices with entries $0, \pm 1$.

\subsection{} Again, it is interesting to compare the above results on $R(\tch_0)_\kk$ to the results on $R(\tch_q)_\CC$ for generic $q \neq 0$.

The trace pairing $Tr: \overline{\tch_q} \to R(\tch_q)_\CC^*$ and the cocenter-representation duality for $\tch_q$ are studied in \cite{CH}. Using the parabolic induction, we are reduced to study the trace pairing between the so-called rigid cocenter and the rigid modules. Here the rigid cocenter is the subspace of $\overline{\tch_q}$ spanned by the images all proper parahoric subalgebras. The rigid modules are constructed using Lusztig's reduction theorem from affine Hecke algebras to graded affine Hecke algebras, and Springer representations for the finite Weyl group in the corresponding graded affine Hecke algebras. It is proved in \cite[Theorem 1.1]{CH} that such pairing is perfect.

%One may define the rigid character table for $\tilde \ch_q$. This is an invertible square matrix, but is quite complicated to understand. Roughly speaking, for each parahoric subalgebra, the Springer representations for the corresponding finite Weyl group are involved in the rigid character table. However, all the maximal parahoric subalgebras are somehow mixed together.

For $\tilde \ch_0$, as we have seen above, the situation is different. What appears in this situation is not the representations of finite Weyl groups (or equivalently, the finite Hecke algebras with generic parameters), but that of the finite $0$-Hecke algebra instead. This provides an interpretation for the difference between the representation theory of $\tch_q$ for $q \neq 0$ and that of $\tch_0$.

%This provides an interpretation that for complex representations of $\tch_q$, we need to use Springer theory while for representations of $\tch_0$, parabolic inductions of characters already give us a basis of $R(\tch_0)_\kk$.

\subsection{} The paper is organized as follows.

In section 1, we recall the definition of affine $0$-Hecke algebras, parabolic algebras, and trace maps. In section 2, we describe the cocenters of extended affine $0$-Hecke algebras. In section 3, we introduce the standard pairs and use them to compute the characters of $\tch_0$-modules. In section 4, we construct some finite-dimensional modules and provide some character formulas. In section 5, we give a basis of the Grothendieck group of finite dimensional modules and study rigid and supersingular modules.

\section{Preliminary}

\subsection{} Let $\fR=(X, R, Y, R^\vee, F_0)$ be a based root datum, where $X$ and $Y$ are free abelian groups of finite rank together with a perfect pairing $\< , \>: X \times Y \to \ZZ$, $R \subset X$ is the set of roots, $R^\vee \subset Y$ is the set of coroots and $F_0 \subset R$ is the set of simple roots. Let $\a \mapsto \a^\vee$ be the natural bijection from $R$ to $R^\vee$ such that $\<\a, \a^\vee\>=2$. For $\a \in R$, we denote by $s_\a: X \to X$ the corresponding reflections stabilizing $R$. Let $R^+ \subset R$ be the set of positive roots determined by $F_0$. Let $X^+=\{\l \in X; \<\l, \a^\vee\> \ge 0, \, \forall \a \in R^+\}$. For any $v \in X_\QQ$ , we set $J_v=\{\a \in F_0; \<v, \a^\vee\>=0\}$. For any $J \subset F_0$, we set $X^+(J)=\{\l \in X^+; J_\l=J\}$.

\subsection{} Let $W_0$ be the (finite) Weyl group generated by the set of simple reflections $S_0=\{s_\a; \a \in F_0\}$.

Let $W_{\aff}=\ZZ R \rtimes W_0$ be the affine Weyl group and $S_{\aff} \supset S_0$ be the set of simple reflections in $W_{\aff}$. Then $(W_{\aff}, S_{\aff})$ is a Coxeter group. Let $\tW=X \rtimes W_0$ be the extended affine Weyl group. Then $W_{\aff}$ is a subgroup of $\tW$. For $\l \in X$, we denote by $t^\l \in \tW$ the corresponding translation element.

Let $V=X \otimes_\ZZ \RR$. For $\a \in R$ and $k \in \ZZ$, set $$H_{\a, k}=\{v \in V; \<v, \a^\vee\>=k\}.$$ Let $\fH=\{H_{\a, k}; \a \in R, k \in \ZZ\}$.  Connected components of $V-\cup_{H \in \fH}H$ are called alcoves. Let $$C_0=\{v \in V; 0 < \<v, \a^\vee\> <1, \, \forall \a \in R^+\}$$ be the fundamental alcove.
We may regard $W_{\aff}$ and $\tW$ as subgroups of affine transformations of $V$, where $t^\l$ acts by translation $v \mapsto v+\l$ on $V$. The actions of $W_{\aff}$ and $\tW$ on $V$ preserve the set of alcoves.

For any $\tw \in \tW$, we denote by $\ell(\tw)$ the number of hyperplanes in $\fH$ separating $C_0$ from $\tw C_0$. Then $\tW=W_{\aff} \rtimes \Omega$, where $\Omega=\{\tw \in \tW; \ell(\tw)=0\}$ is the subgroup of $\tW$ stabilizing fundamental alcove $C_0$. The conjugation action of $\Omega$ on $\tW$ preserves the set $S_{\aff}$ of simple reflections in $W_{\aff}$. %For any element $\t \in \Omega$, we denote by $\Ad(\t)$ the induced bijection on $S_{\aff}$.

For any $x \in W_{\aff}$ and any $\t \in \Omega$, we define $$\supp(x \t)=\cup_{i \in \NN} \t^i(\supp(x)) \t^{-i}.$$ Here $\supp(x)$ is the set of simple reflections that appear in some (or equivalently, any) reduced expression of $x$.

\subsection{}\label{1.3} The (generic) Hecke algebra $\tilde \ch_q$ associated to the extended affine Weyl group $\tW$ is an associative $\ZZ[q]$-algebra with basis $\{T_{\tw}; \tw \in \tW\}$ subject to the following relations \begin{gather*} T_{\tx} T_{\ty}=T_{\tx \ty}, \quad \text{ if } \ell(\tx)+\ell(\ty)=\ell(\tx \ty); \\ (T_s+1)(T_s-q)=0, \quad \text{ for } s \in S_{\aff}. \end{gather*}

If we set $q=0$, then the second relation becomes $T_s^2=-T_s$ and the $\ZZ$-algebra we obtain is called the (affine) $0$-Hecke algebra associated to $\tW$. We denote it by $\tilde \ch_0$.

By \cite[Corollary 2]{V05}, the map $T_{\tw} \mapsto {}^\iota T_{\tw}:=(-q)^{\ell(\tw)} T_{\tw\i}\i$ gives an involution $\iota$ of $\tch_q$. We still denoted by $\iota$ the induced involution of $\tch_0$.

%Let $\tw=s_1 s_2 \cdots s_r \o$ be a reduced expression of $\tw \in \tW$, where $s_i \in S_{\aff}$ and $\o \in \Om$. We define ${}^\iota T_{\tw}=(-1)^{\ell(\tw)} (T_{s_1}+1)(T_{s_2}+1) \cdots (T_{s_r}+1) T_\o \in \ch_0$, which does not depend on the reduced expression \remind{give reference}. The map $T_{\tw} \mapsto {}^\iota T_{\tw}$ gives an involution of $\ch_0$.

%We denote by $\ch_q$ the generic Hecke algebra associated to the affine Weyl group $W_{\aff}$ and $\ch_0$ the $0$-Hecke algebra associated to $W_{\aff}$.

\subsection{} Let $[\tilde \ch_0, \tilde \ch_0]$ be the commutator of $\tilde \ch_0$, the $\ZZ$-submodule spanned by $[T_{\tx}, T_{\ty}]:=T_{\tx} T_{\ty}-T_{\ty} T_{\tx}$ for $\tx, \ty \in \tW$. Let $\overline{\tilde \ch_0}=\tilde \ch_0/[\tilde \ch_0, \tilde \ch_0]$ be the cocenter of $\tch_0$. Denote by $R(\tilde \ch_0)_\kk$ the Grothendieck group of finite dimensional representations of $\tilde \ch_0$ over an arbitrary algebraically closed field $\kk$. Consider the trace map $$Tr: \overline{\tilde \ch_0} \to R(\tilde \ch_0)_\kk^*, \qquad h \mapsto (V \mapsto Tr(h, V)).$$

Similar map for generic $q \in \CC^\times$ and $\kk=\CC$ is studied in the joint work of Ciubotaru and the first-named author \cite{CH}, in which case the trace map is injective and there is a ``perfect pairing'' between the rigid-cocenter and rigid-representations of $\tilde \ch_q$.

For $q=0$, the situation is different. The map is not injective. However, there is still a nice pairing between cocenter and representations.

\subsection{} \label{parabolic} Now we introduce parabolic subalgebras.

For any $J \subset F_0$, we denote by $R_J$ the set of roots spanned by $J$ and set $R^\vee_J=\{a^\vee; \a \in R_J\}$. Let $\fR_J=(X, R_J, Y, R^\vee_J, J)$ be the based root datum corresponding to $J$. Let $W_J \subset W_0$ and $\tW_J=X \rtimes W_J$ be the Weyl group and the extended affine Weyl group of $\fR_J$ respectively. We say $\tw \in \tW_J$ is $J$-positive if $\tw \in  t^\l W_J $ for some $\l \in X$ such that $\<\l, \a\> \ge 0$ for $\a \in R^+ \smallsetminus R_J$. Denote by $\tW_J^+$ the set of $J$-positive elements, which is a submonoid of $\tW_J$, see \cite[Section 6]{BK} and \cite[II.4]{V98}.

We set $\fH_J=\{H_{\a, k} \in \fH; \a \in R_J, k \in \ZZ\}$ and $C_J=\{v \in V; 0 < \<v, \a^\vee\> < 1, \a \in R_J^+\}$. For any $\tw \in \tW_J$, we denote by $\ell_J(\tw)$ the number of hyperplanes in $\fH_J$ separating $C_J$ from $\tw C_J$.

Let $\tilde \ch_{J, 0}$ be the affine $0$-Hecke algebra associated to $\fR_J$ with standard basis $T_{\tw}^J$ for $\tw \in \tW_J$. Let $\tilde \ch_{J, 0}^+$ be the subalgebra of $\tilde \ch_{J, 0}$ spanned by $T_{\tw}^J$ for $\tw \in \tW_J^+$. We have a natural embedding $$\tilde \ch_{J, 0}^+ \lto \tilde \ch_0, \qquad T_{\tw}^J \mapsto T_{\tw}.$$ Notice that this embedding does not extend to an algebra homomorphism $\tilde \ch_{J, 0} \to \tilde \ch_0$ since $T_{t^\l}^J$ for $\l \in X^+(J)$ is invertible in $\tilde \ch_{J, 0}$, but $T_{t^\l}$ is not invertible in $\tilde \ch_0$ unless $J=F_0$.

Let $(W_J)_{\aff}=\ZZ R_J \rtimes W_J$ and $J_{\aff} \supseteq J$ the set of simple reflections of $(W_J)_{\aff}$. Then $\tW_J=(W_J)_{\aff} \rtimes \Omega_J$, where $\Omega_J=\{\tw \in \tW_J; \ell_J(\tw)=0\}$. We denote by $\ch_{J, 0}$ the $0$-Hecke algebra associated to $(W_J)_{\aff}$.

We denote by $\tW^J$ (resp. ${}^J \tW$) the set of minimal coset representatives in $\tW/W_J$ (resp. $W_J \setminus \tW$). For $J, K \subset F_0$, we simply write $\tW^J \cap {}^K \tW$ as ${}^K \tW^J$. We define ${}^J W_0, W_0^J$ and ${}^J W_0^K$ in a similar way.

\section{Cocenter of $\tilde \ch_0$}

\subsection{}\label{2.1} For $\tw, \tw' \in \tW$ and $s \in S_{\aff}$, we write $\tw \xrightarrow{s} \tw'$ if $\tw'=s \tw s$ and $\ell(\tw') \le \ell(\tw)$.  We write $\tw \to \tw'$ if there exists a sequence $\tw=\tw_0, \tw_1, \cdots, \tw_n=\tw'$ of elements in $\tW$ such that for any $k$, $\tw_{k-1} \xrightarrow{s_k} \tw_k$ for some $s_k \in S_{\aff}$. We write $\tw \tilde \approx \tw'$ if there exists $\t \in \Omega$ such that $\tw \to \t \tw' \t \i$ and $\t \tw' \t \i \to \tw$ and we say that $\tw$ and $\tw'$ are in the same cyclic-shift class.

%We say that the two elements $\tw, \tw' \in \tW$ are {\it elementarily strongly conjugate} if $\ell(\tw)=\ell(\tw')$ and there exists $\tx \in \tW$ such that $\tw'=\tx \tw \tx \i$, and $\ell(\tx \tw)=\ell(\tx)+\ell(\tw)$ or $\ell(\tw \tx \i)=\ell(\tx)+\ell(\tw)$. We say that $\tw, \tw'$ are {\it strongly conjugate} if there exists a sequence $\tw=\tw_0, \tw_1, \cdots, \tw_n=\tw'$ such that for each $i$, $\tw_{i-1}$ is elementarily strongly conjugate to $\tw_i$. In this case, we write $\tw \sim \tw'$. It is easy to see that $\tw \tilde \approx \tw'$ implies that $\tw \sim \tw'$.

Note that $\tilde \approx$ is an equivalence relation. Let $cl(\tW)$ be the set of conjugacy classes of $\tW$. For any $\co \in cl(W)$, let $\co_{\min}$ be the set of minimal length elements in $\co$. Since $\tilde \approx$ is compatible with the length function, $\co_{\min}$ is a union of cyclic-shift classes.

Let $\tW_{\min}=\sqcup_{\co \in cl(W)} \co_{\min}$ and $\text{Cyc}(\tW_{\min})$ the set of cyclic-shift classes in $W_{\min}$.

%\

%The following result is proved in \cite{HN}.
%The following remarkable properties for conjugacy classes in a finite Coxeter groups was first discovered by Geck and Pfeiffer in \cite{GP93} and generalized to twisted conjugacy classes in \cite{GKP} and \cite{He05}, via case-by-case analysis. A case-free proof was obtained later in \cite{HN12}.

%\begin{thm}\label{min-elt}
%Let $\co$ be a conjugacy class of $\tW$. Then

%(1) For any $\tw \in \co$, there exists $\tw' \in \co_{\min}$ such that $\tw \to \tw'$.

%(2) $\co_{\min}$ is a single strongly conjugate class.
%\end{thm}

%\subsection{} In \cite{HN} and \cite{CH}, we give an explicit basis of the cocenter of $\tilde \ch_q$ (for $q \neq 0$) using the above property on the minimal length elements and the pairing between cocenter and representations of affine Hecke algebras. The basis is indexed by the set of strongly conjugate classes in $\tW_{\min}$.

%In the rest of this section, we give an explicit basis of $\overline {\tilde \ch_0}$. It is different from the $q \neq 0$ case. Here the basis is indexed by the set of cyclic-shift classes in $\tW_{\min}$ instead. The proof we give here is also different from \cite{HN} and \cite{CH}. The proof does not require any knowledge of representations of $\tilde \ch_0$. It uses the partial order on the $\tilde \approx$-equivalence classes, which we introduce below.

\subsection{} Now we introduce a partial order on $\text{Cyc}(\tW_{\min})$.

Let $w \in \tW$ and $\Sigma \in \text{Cyc}(\tW_{\min})$. We write $\Sigma \preceq \tw$ if there exists $\tw' \in \Sigma$ such that $\tw' \le \tw$.

For $\Sigma, \Sigma' \in \text{Cyc}(\tW_{\min})$, we write $\Sigma' \preceq \Sigma$ if $\Sigma' \preceq \tw$ for some $\tw \in \Sigma$. By \cite[Corollary 4.6]{He05}, $\Sigma' \preceq \Sigma$ if and only if $\Sigma' \preceq \tw$ for any $\tw \in \Sigma$. In particular, $\preceq$ is transitive, which defines a partial order on $\text{Cyc}(\tW_{\min})$.

We have the following result.

\begin{prop}\label{Sigma}
Let $\tw \in \tW$. Then

(1) The set $\{\Sigma \in \text{Cyc}(\tW_{\min}); \Sigma \preceq \tw\}$ contains a unique maximal element $\Sigma_{\tw}$.

(2) Let $s \in S_{\aff}$ such that $\tw \to s \tw s$. Then $$\Sigma_{\tw}=\begin{cases} \Sigma_{s \tw s}, & \text{ if } \ell(s \tw s)=\ell(\tw); \\ \Sigma_{s \tw}, & \text{ if } \ell(s \tw s)<\ell(\tw). \end{cases}$$
\end{prop}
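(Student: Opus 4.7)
The plan is to prove (1) and (2) simultaneously by induction on $\ell(\tw)$, with the key step being to establish that the set $\ca(\tw) := \{\Sigma \in \Cyc(\tW_{\min}) : \Sigma \preceq \tw\}$ transforms predictably under a cyclic shift. Concretely, I would first prove two lemmas: Lemma A says that if $\ell(s\tw s) = \ell(\tw)$, then $\ca(\tw) = \ca(s\tw s)$; Lemma B says that if $\ell(s\tw s) < \ell(\tw)$, then $\ca(\tw) = \ca(s\tw)$. Both lemmas reduce to the same task: given $\tu \le \tw$ with $\tu \in \Sigma \cap \tW_{\min}$, exhibit $\tu' \in \Sigma$ Bruhat-below the target element ($s\tw s$ or $s\tw$, respectively).

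I would handle this in two subcases. If $s\tu > \tu$, then the lifting property of Bruhat order applied to $\tu \le \tw$ and $s\tw < \tw$ gives $\tu \le s\tw$; in Lemma A we further have $s\tw \le s\tw s$, so $\tu' = \tu$ works in either lemma. If $s\tu < \tu$, then the minimality of $\tu$ in its conjugacy class forces $\ell(s\tu s) = \ell(\tu)$, placing $s\tu s$ in the same cyclic-shift class $\Sigma$ via a length-preserving cyclic shift. Two successive applications of the lifting property (first on the left to get $s\tu \le s\tw$, then on the right using $(s\tu)s > s\tu$) then produce $s\tu s$ below the required target, so $\tu' = s\tu s$ works.

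The induction then proceeds as follows. In the base case $\tw \in \tW_{\min}$, the cyclic-shift class of $\tw$ itself is the unique maximum of $\ca(\tw)$; for (2), any $s$ with $\tw \to s\tw s$ satisfies $\ell(s\tw s) = \ell(\tw)$ automatically by minimality, so $s\tw s \in \Sigma_{\tw}$. In the inductive step with $\tw \notin \tW_{\min}$, the reduction theorem of He-Nie \cite{HN} produces a sequence of length-preserving cyclic shifts from $\tw$ to some $\tw'$ admitting a length-decreasing cyclic shift $\ell(s\tw' s) < \ell(\tw')$. Iterating Lemma A and then applying Lemma B gives $\ca(\tw) = \ca(s\tw')$, and since $\ell(s\tw') < \ell(\tw)$ the inductive hypothesis provides a unique maximum $\Sigma_{s\tw'}$ of the latter set, which is then $\Sigma_{\tw}$. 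Part (2) in both cases now drops out of Lemmas A and B combined with (1).

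The main obstacle is the subcase $s\tu < \tu$ in the proof of the lemmas: one must chain together the left and right versions of the lifting property to produce the required Bruhat inequality, while simultaneously ensuring that the replacement element $\tu' = s\tu s$ lies in the cyclic-shift class $\Sigma$. This is where the minimality of $\tu$ is crucial, as it forces $\ell(s\tu s) = \ell(\tu)$ and hence realizes $s\tu s$ as a length-preserving cyclic-shift neighbor of $\tu$ within $\Sigma$.
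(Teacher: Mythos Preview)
Your proposal is correct and almost certainly reconstructs the argument the paper has in mind: the paper itself gives no proof, only the remark that ``a similar statement is proved in \cite[Proposition~6.2(1)]{He15} for finite Weyl groups'' and that ``the same proof also works for extended affine Weyl groups.'' Your Lemmas~A and~B together with the He--Nie reduction are exactly the expected ingredients; in particular Lemma~A is the general-$\tw$ version of the fact the paper cites from \cite[Corollary~4.6]{He05} when introducing the partial order~$\preceq$.

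Two small points of exposition you should tighten. First, your write-up of Lemma~A tacitly assumes $s\tw<\tw$; when $\ell(s\tw s)=\ell(\tw)$ the other possibility is $\tw s<\tw$, which is handled by the left--right mirror of the same lifting argument. Second, you only argue the inclusion $\ca(\tw)\subset\ca(s\tw s)$ in Lemma~A; the reverse inclusion follows by applying the same (mirrored) argument to $\tw':=s\tw s$, since $s\tw<\tw$ forces $\tw's<\tw'$. For Lemma~B the reverse inclusion $\ca(s\tw)\subset\ca(\tw)$ is of course immediate from $s\tw<\tw$. None of this is a genuine gap, but it is worth saying explicitly.
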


A similar statement is proved in \cite[Proposition 6.2 (1)]{He15} for finite Weyl groups. The same proof also works for extended affine Weyl groups.

\

We also have the following result, which follows directly from the definition of $\Sigma_{\tw}$.

\begin{lem}\label{tau-w}
Let $\tw \in \tW$ and $\t \in \Omega$. Then $\Sigma_{\tw}=\Sigma_{\t \tw \t \i}$.
\end{lem}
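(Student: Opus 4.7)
The plan is to reduce the lemma to showing that the two sets
$$\{\Sigma \in \text{Cyc}(\tW_{\min}) \mid \Sigma \preceq \tw\} \quad \text{and} \quad \{\Sigma \in \text{Cyc}(\tW_{\min}) \mid \Sigma \preceq \t\tw\t\i\}$$
coincide. Once this equality is established, the uniqueness of the maximal element of each set, furnished by Proposition \ref{Sigma}(1), immediately gives $\Sigma_{\tw} = \Sigma_{\t\tw\t\i}$.

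The first step is to collect the relevant stability properties of conjugation by an element of $\Omega$. Since $\t$ stabilizes $C_0$, the map $\tx \mapsto \t \tx \t\i$ is length-preserving and permutes $S_{\aff}$. Consequently, it sends reduced expressions to reduced expressions, so it preserves the Bruhat order: $\tx \le \ty$ iff $\t\tx\t\i \le \t\ty\t\i$. It also preserves the elementary cyclic-shift relation $\xrightarrow{s}$ (with $s$ replaced by $\t s \t\i$), hence the relation $\to$, and in particular $\tW_{\min}$ is stable under $\Omega$-conjugation.

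The second step is to observe that for any $\tw' \in \tW_{\min}$, the elements $\tw'$ and $\t\tw'\t\i$ lie in the same cyclic-shift class. Indeed, setting $\tw'' = \t\i \tw' \t$, we have $\t \tw'' \t\i = \tw'$, and the trivial (empty) sequence shows both $\tw' \to \t\tw''\t\i$ and $\t\tw''\t\i \to \tw'$; by the definition of $\tilde{\approx}$ this means $\tw' \tilde{\approx} \tw'' = \t\i\tw'\t$, and applying this to $\t\tw'\t\i$ in place of $\tw'$ gives $\t\tw'\t\i \tilde{\approx} \tw'$ as desired.

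With these two ingredients in hand, the equality of the two sets is immediate: if $\Sigma \preceq \tw$, pick $\tw' \in \Sigma$ with $\tw' \le \tw$; then $\t\tw'\t\i \le \t\tw\t\i$ by step one, and $\t\tw'\t\i \in \Sigma$ by step two, so $\Sigma \preceq \t\tw\t\i$. The reverse inclusion follows by the same argument applied to $\t\i$ in place of $\t$. I do not anticipate any real obstacle; the only subtlety is being careful to verify that $\Omega$-conjugation interacts well with each of the three structures involved (length, Bruhat order, and the cyclic-shift equivalence), which is routine.
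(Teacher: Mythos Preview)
Your proof is correct and is precisely a careful unpacking of what the paper asserts: the paper gives no proof beyond the remark that the lemma ``follows directly from the definition of $\Sigma_{\tw}$,'' and your argument spells out exactly why this is so. The only point worth noting is that your second step is already built into the definition of $\tilde\approx$ (which explicitly allows an $\Omega$-twist), so the argument could be stated even more briefly, but what you have written is entirely correct.
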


\subsection{} By definition, if $\tw \tilde \approx \tw'$, then the images of $T_{\tw}$ and $T_{\tw'}$ in $\overline{\tilde \ch_0}$ are the same. In particular, for any $\Sigma \in \text{Cyc}(\tW_{\min})$, we denote by $T_{\Sigma}$ the image of $T_{\tw}$ in $\overline{\tilde \ch_0}$ for any $\tw \in \Sigma$. We also denote by $\ell(\Sigma)$ the length of any element in $\Sigma$.

Similar to the proof of \cite[Proposition 6.2 (2)]{He15}, we have that

\begin{prop}\label{tw-sigma}
Let $\tw \in \tW$. Then the image of $T_{\tw}$ in $\overline{\tilde \ch_0}$ equals $(-1)^{\ell(w)-\ell(\Sigma_{\tw})} T_{\Sigma_{\tw}}$.
\end{prop}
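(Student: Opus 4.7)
The plan is to mimic the proof of \cite[Proposition 6.2 (2)]{He15}, running an induction on $\ell(\tw)$ and transporting the identity along the cyclic shift relations that define $\Sigma_{\tw}$.

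First I will dispose of the base case $\tw\in\tW_{\min}$. In this situation the cyclic-shift class of $\tw$ itself is an element of $\{\Sigma\in\text{Cyc}(\tW_{\min});\Sigma\preceq\tw\}$, so by the uniqueness of the maximum in Proposition \ref{Sigma}(1) it coincides with $\Sigma_{\tw}$. Hence $\ell(\Sigma_{\tw})=\ell(\tw)$ and the claimed equality reduces to the definition of $T_{\Sigma_{\tw}}$.

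For the inductive step, assume $\tw\notin\tW_{\min}$. The Geck--Pfeiffer-type reduction for extended affine Weyl groups established in \cite{HN} furnishes a chain $\tw=\tw_0\xrightarrow{s_1}\tw_1\xrightarrow{s_2}\cdots\xrightarrow{s_k}\tw_k=\tw'$ of equal-length conjugations by simple reflections, followed by a step $\tw'\xrightarrow{s}s\tw' s$ with $\ell(s\tw' s)=\ell(\tw')-2$. Along each equal-length step I will check that $T_{\tw_i}\equiv T_{\tw_{i+1}}$ in $\overline{\tch_0}$: if $\ell(s\tw s)=\ell(\tw)$ with, say, $\ell(s\tw)=\ell(\tw)-1$ and $\ell(\tw s)=\ell(\tw)+1$, then $T_{\tw}=T_s T_{s\tw}$ and $T_{s\tw s}=T_{s\tw}T_s$, so modulo commutators $T_{s\tw s}\equiv T_s T_{s\tw}=T_\tw$. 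For the final descent step I combine
\[
T_{\tw'} \;=\; T_s T_{s\tw'} \;=\; T_s T_{s\tw' s} T_s
\]
with $T_s^2=-T_s$ (applied after one cyclic rearrangement) to obtain $T_{\tw'}\equiv -T_{s\tw'}$; the equal-length identity $T_{\tw' s}\equiv T_{s\tw'}$ used here follows from the previous case applied to $\tw's$, since $s(\tw's)s=s\tw'$ and these elements have equal length. Putting the chain together gives $T_{\tw}\equiv -T_{s\tw'}$ in $\overline{\tch_0}$.

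Now I will close the induction. By Proposition \ref{Sigma}(2), applied step by step, $\Sigma_{\tw}=\Sigma_{\tw'}=\Sigma_{s\tw'}$. Since $\ell(s\tw')=\ell(\tw)-1<\ell(\tw)$, the induction hypothesis yields
\[
T_{s\tw'}\equiv(-1)^{\ell(s\tw')-\ell(\Sigma_{s\tw'})}T_{\Sigma_{s\tw'}}=(-1)^{\ell(\tw)-1-\ell(\Sigma_{\tw})}T_{\Sigma_{\tw}}.
\]
Multiplying by $-1$ delivers the desired formula. I expect the main obstacle to be bookkeeping in the equal-length case, specifically making sure that the various $\xrightarrow{s}$ relations in the He--Nie reduction can each be translated into a clean commutator identity in $\overline{\tch_0}$ despite the extra sign issues coming from the quadratic relation $T_s^2=-T_s$ (which is absent in the generic parameter analysis of \cite{HN}, \cite{CH}); everything else is a straightforward combination of the cited results.
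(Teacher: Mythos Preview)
Your argument is correct and is exactly the approach the paper intends: it simply cites \cite[Proposition 6.2 (2)]{He15} and asserts that the same induction on $\ell(\tw)$ via the He--Nie reduction goes through in the affine setting, which is precisely what you have written out. One small remark: in the descent step you can get $T_{\tw'}\equiv -T_{s\tw'}$ directly by cycling $T_s T_{s\tw's}T_s\equiv T_{s\tw's}T_s^2=-T_{s\tw's}T_s=-T_{s\tw'}$, so the auxiliary equal-length identity $T_{\tw's}\equiv T_{s\tw'}$ is not actually needed.
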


We also need the following observation on the commutator of $\tilde \ch_0$.

\begin{lem}\label{red-lem}
The $\ZZ$-module $[\tilde \ch_0, \tilde \ch_0]$ is spanned by $[T_{\tw}, T_x]$ for $\tw \in \tW$ and $x \in S_{\aff} \cup \Omega$.
\end{lem}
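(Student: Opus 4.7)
The plan is to argue by induction on $\ell(\ty)$ that for every $\tw, \ty \in \tW$ the commutator $[T_{\tw}, T_{\ty}]$ lies in the $\ZZ$-submodule $V$ spanned by $[T_{\tu}, T_x]$ with $\tu \in \tW$ and $x \in S_{\aff} \cup \Omega$. Equivalently, I will show that $T_{\tw} T_{\ty} \equiv T_{\ty} T_{\tw} \pmod V$ for all $\tw, \ty \in \tW$.

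The base case $\ell(\ty)=0$ forces $\ty \in \Omega$, in which case $[T_{\tw}, T_{\ty}]$ is by definition a generator of $V$. For the inductive step, assume $\ell(\ty) \ge 1$. Using the decomposition $\tW = W_{\aff} \rtimes \Omega$ together with a reduced expression for the $W_{\aff}$-component, one can write $\ty = s \ty'$ with $s \in S_{\aff}$ and $\ell(\ty') = \ell(\ty)-1$; the braid relation then gives $T_{\ty} = T_s T_{\ty'}$. Since $[T_{\tw}, T_s]$ is a generator of $V$, we have $T_{\tw} T_s \equiv T_s T_{\tw} \pmod V$, and combining this with the inductive hypothesis $T_{\tw} T_{\ty'} \equiv T_{\ty'} T_{\tw} \pmod V$ yields
\[
T_{\tw} T_{\ty} \;=\; T_{\tw} T_s T_{\ty'} \;\equiv\; T_s T_{\tw} T_{\ty'} \;\equiv\; T_s T_{\ty'} T_{\tw} \;=\; T_{\ty} T_{\tw} \pmod V,
\]
which completes the induction.

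The reasoning is entirely formal and uses nothing beyond the semidirect product structure $\tW = W_{\aff} \rtimes \Omega$ (which guarantees a simple left descent for any element of positive length) and the braid part of the Hecke relations; the quadratic relation $T_s^2 = -T_s$ plays no role, so the same reduction works for $\tch_q$ at any parameter $q$, in parallel with the arguments in \cite{HN} and \cite{CH}. I do not anticipate any substantive obstacle.
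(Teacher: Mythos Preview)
Your inductive step contains a genuine gap. The congruence relation ``$\equiv \pmod V$'' is defined with respect to a $\ZZ$-submodule $V$, not an ideal, so it is \emph{not} compatible with multiplication. Concretely, from $T_{\tw} T_s - T_s T_{\tw} = [T_{\tw}, T_s] \in V$ you cannot conclude that $(T_{\tw} T_s - T_s T_{\tw}) T_{\ty'} \in V$, and likewise from $[T_{\tw}, T_{\ty'}] \in V$ you cannot conclude that $T_s [T_{\tw}, T_{\ty'}] \in V$. Both displayed congruence steps are therefore unjustified. (Note that even the full commutator submodule $[\tilde\ch_0, \tilde\ch_0]$ is not a two-sided ideal, so reinterpreting the congruence modulo the full commutator does not help either.)

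The fix is to stay inside the world of commutators rather than congruences. The paper uses the identity
\[
[a, bc] \;=\; [ab, c] + [ca, b],
\]
applied with $a = T_{\tw}$, $b = T_s$, $c = T_{s\ty}$ (so $bc = T_{\ty}$), to write
\[
[T_{\tw}, T_{\ty}] \;=\; [T_{\tw} T_s,\, T_{s\ty}] \;+\; [T_{s\ty} T_{\tw},\, T_s].
\]
The first summand is handled by the inductive hypothesis (after expanding $T_{\tw} T_s$ in the standard basis), and the second is already a $\ZZ$-linear combination of generators of $V$ (after expanding $T_{s\ty} T_{\tw}$). This is the missing idea in your argument.
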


\begin{proof}
Let $[\tilde \ch_0, \tilde \ch_0]'$ be the submodule of $[\tilde \ch_0, \tilde \ch_0]$ spanned by $[T_{\tw}, T_x]$ for $x \in S_{\aff} \cup \Omega$. It suffices to show that $[T_{\tw}, T_{\tw'}] \in [\tilde \ch_0, \tilde \ch_0]'$ for any $\tw, \tw' \in \tW$.

We argue by induction on $\ell(\tw')$. If $\ell(\tw')=0$, then it follows by definition. Let $k \ge 1$. Suppose that $[T_{\tw}, T_{\tu}] \in [\tilde \ch_0, \tilde \ch_0]'$ for any $\tu$ with $\ell(\tu)<k$. Let $s \in S_{\aff}$ with $s \tw'<\tw'$. Then $$[T_{\tw}, T_{\tw'}]=[T_{\tw} T_s, T_{s \tw'}]+[T_{s \tw'} T_{\tw}, T_s].$$ By inductive hypothesis, $[T_{\tw}, T_{\tw'}] \in [\tilde \ch_0, \tilde \ch_0]'$.
\end{proof}

\subsection{} Now we prove Theorem \ref{basis}.

Let $M$ be the free $\ZZ$-module with basis $\{[\Sigma]; \Sigma \in {Cyc}(\tW_{\min})\}$. Define a $\ZZ$-linear map $$\psi: \tilde \ch_0 \to M, \qquad T_{\tw} \mapsto (-1)^{\ell(w)-\ell(\Sigma_{\tw})} [\Sigma_{\tw}].$$

Let $\tw \in \tW$ and $s \in S_{\aff}$. We show that

(a) $[T_{\tw}, T_s] \in \ker \psi$.

If $\tw<\tw s$ and $\tw<s \tw$, then $\ell(\tw \s)=\ell(s \tw)$ and $[T_{\tw}, T_s]=T_{\tw s}-T_{s \tw}$. By Proposition \ref{Sigma} (2), $\Sigma_{\tw s}=\Sigma_{s \tw}$ and $\psi([T_{\tw}, T_s])=(-1)^{\ell(\tw)+1-\ell(\Sigma_{\tw s})} \bigl(\Sigma_{\tw s}-\Sigma_{s \tw} \bigr)=0$.

If $\tw s, s \tw<\tw$, then $[T_{\tw}, T_s]=0 \in \ker \psi$.

If $s \tw<\tw<\tw s$, then $[T_{\tw}, T_s]=T_{\tw s}+T_{\tw}$. By Proposition \ref{Sigma}(2),  $\Sigma_{\tw s}=\Sigma_{\tw}$ and $\psi([T_{\tw}, T_s])=(-1)^{\ell(\tw)+1-\ell(\Sigma_{\tw s})} \Sigma_{\tw s}+(-1)^{\ell(\tw)-\ell(\Sigma_{\tw})} \Sigma_{\tw}=0$.

If $\tw s<\tw<s \tw$, then $[T_{\tw}, T_s]=-T_{\tw}-T_{s \tw}$ and by Proposition \ref{Sigma}(2), $\Sigma_{\tw}=\Sigma_{s \tw}$ and $\psi([T_{\tw}, T_s])=(-1)^{\ell(\tw)+1-\ell(\Sigma_{\tw})} \Sigma_{\tw}+(-1)^{\ell(\tw)+2-\ell(\Sigma_{\tw})} \Sigma_{s \tw}=0$.

Thus (a) is proved.

By Lemma \ref{tau-w}, $[T_{\tw}, T_\t] \in \ker \psi$ for any $\tw \in \tW$ and $\t \in \Omega$. Thanks to Lemma \ref{red-lem}, $[\tilde \ch_0, \tilde \ch_0] \subset \ker \psi$ and we have an induced map $\overline{\tilde \ch_0} \to M$, which we still denote by $\psi$.

On the other hand, we have a well-defined $\ZZ$-linear map $\phi: M \to \overline{\tilde \ch_0}$ which sends $[\Sigma]$ to $T_{\Sigma}$. It is easy to see that $\psi \circ \phi$ is the identity map. In particular, $\phi$ is injective. By Proposition \ref{tw-sigma}, $\phi$ is also surjective. Thus $\phi$ is an isomorphism.

\section{Standard pairs}\label{standard}

\subsection{} Let $n_0=\sharp W_0$. For any $\tw \in \tW$, $\tw^{n_0}=t^\l$ for some $\l \in X$. We set $\nu_{\tw}=\l/n_0 \in X_\QQ$ and $\bar \nu_{\tw} \in X_\QQ^+$ the unique dominant element in the $W_0$-orbit of $\nu_{\tw}$. It is easy to see that the map $\tW \to V, \tw \mapsto \bar \nu_{\tw}$ is constant on each conjugacy class of $\tW$. %For any conjugacy class $\co$, we set $\nu_\co=\bar \nu_{\tw}$ for any $\tw \in \co$ and call it the {\it Newton point of} $\co$.

We say that an element $\tw \in \tW$ is straight if $\ell(\tw^n)=n \ell(\tw)$ for any $n \in \NN$. By \cite[Lemma 1.1]{He00}, $\tw$ is straight if and only if $\ell(\tw)=\<\bar \nu_{\tw}, 2 \rho^\vee\>$, where $\rho$ is the half sum of positive coroots. A conjugacy class that contains a straight element is called a straight conjugacy class.

It is proved in \cite[Proposition 2.8]{HN} that for each cyclic-shift class in $\tW_{\min}$, we have some representatives as follows.

\begin{prop}\label{rep}
For any $\tw \in \tW_{\min}$, there exists a subset $K \subset S_{\aff}$ with $W_K$ finite, a straight element $y \in {}^K \tW {}^K$ with $y K y \i=K$ and an element $w \in W_K$ such that $\tw \tilde \approx w y$. Here $W_K \subset W_{\aff}$ denotes the subgroup generated by reflections of $K$.
\end{prop}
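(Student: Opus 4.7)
The plan is to induct on the \emph{defect}
$d(\tw) := \ell(\tw) - \<\bar \nu_{\tw}, 2 \rho^\vee\>$, which by the characterization of straight elements recalled just above is non-negative on $\tW$ and vanishes precisely when $\tw$ is straight. Since $\bar \nu_{\tw}$ is a conjugacy invariant and all elements of $\co_{\min}$ share one length, $d$ is constant on cyclic-shift classes contained in $\tW_{\min}$, so it is a well-defined invariant of $\Sigma \in \text{Cyc}(\tW_{\min})$.

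In the base case $d(\tw) = 0$, $\tw$ is already straight, and one takes $K = \emptyset$, $w = 1$, $y = \tw$; each of the three conditions on $(K, y, w)$ holds vacuously.

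For the inductive step $d(\tw) > 0$, the plan is to establish the following reduction: there exists $\tw' \tilde \approx \tw$ admitting a length-additive factorization $\tw' = s \tu$ with $s \in S_{\aff}$, $\ell(\tu) = \ell(\tw') - 1$, such that $\tu$ itself lies in $\tW_{\min}$ for its own conjugacy class and $d(\tu) < d(\tw)$. Applying the inductive hypothesis to $\tu$ yields $\tu \tilde\approx w_0 y$ with $K_0 \subset S_{\aff}$, $W_{K_0}$ finite, $w_0 \in W_{K_0}$, and $y \in {}^{K_0} \tW^{K_0}$ straight with $y K_0 y\i = K_0$. One then sets $K := K_0 \cup \{s'\}$ for a suitable cyclic-shift conjugate $s'$ of $s$ and $w := s' w_0$, obtaining $\tw \tilde\approx wy$ with the desired shape. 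The remaining verifications are that $W_K$ is still finite (which forces $s'$ to lie in the finite type component of $S_{\aff}$ compatible with $K_0$), that $y K y\i = K$, and that $y \in {}^K \tW^K$.

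The hard part is the reduction step producing $\tw' = s\tu$. Since $\tw \in \tW_{\min}$ is not straight, the excess $d(\tw)$ must be carried by a finite-type reflection component whose roots lie, after $W_0$-conjugation, in the stabilizer $W_{J_{\bar\nu_{\tw}}}$ of $\bar\nu_{\tw}$. To execute this reduction I would invoke the geometric theory of $P$-alcove elements together with the minimal-length conjugation techniques of \cite{HN}: a minimal-length representative of $\co$ can be cyclic-shifted to a $P$-alcove element for the standard parabolic $P$ determined by $J_{\bar\nu_{\tw}}$, and this $P$-alcove structure directly supplies both the peeled simple reflection $s$ and the normalization condition $y K y\i = K$. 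The normalization reflects the geometric fact that a straight $y$ moves by translation along $\bar\nu_{\tw}$ and therefore permutes the set of reflections whose fixed hyperplanes contain $\bar\nu_{\tw}$ --- precisely the candidates for elements of $K$. Verifying that one really can enlarge $K_0$ by $s'$ while preserving the normalization $y K y\i = K$ is the subtle point and is where the full force of the minimal-length theory in \cite{HN} is needed.
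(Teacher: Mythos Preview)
The paper does not prove this proposition at all; it simply cites \cite[Proposition~2.8]{HN}. So there is nothing to compare your argument to within the paper. Your proposal, however, has a genuine gap in the inductive mechanism, independent of what \cite{HN} does.

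The problem is the passage from the inductive hypothesis for $\tu$ back up to $\tw'$. You obtain $\tu \tilde\approx w_0 y$ via some sequence of elementary cyclic shifts and $\Omega$-conjugations. But those moves are tailored to $\tu$; there is no reason they can be applied to $\tw' = s\tu$ without increasing length, and even if they could, the result would not in general be of the form $s'(w_0 y)$ for a single simple reflection $s'$. Cyclic shifts do not act coordinatewise on a left factor, so ``a suitable cyclic-shift conjugate $s'$ of $s$'' is not a well-defined object here. Consequently the claimed conclusion $\tw \tilde\approx s' w_0 y$ does not follow from the data you have, and even if it did, you would still need to show $y s' y^{-1} \in S_{\aff}$ (so that $K = K_0 \cup \{s'\}$ is $y$-stable), that $W_K$ remains finite, and that $y \in {}^K\tW^K$. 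None of these follow from the induction as set up.

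You also leave the reduction step itself---finding $\tw' \tilde\approx \tw$ with $\tw' = s\tu$, $\tu \in \tW_{\min}$, $d(\tu) < d(\tw)$---as something you ``would invoke'' from \cite{HN}. But that reduction is already most of the content of the proposition; in \cite{HN} the argument proceeds by first cyclic-shifting $\tw$ into a $P$-alcove position for the Levi determined by $\bar\nu_{\tw}$ and then factoring \emph{inside that Levi} as (finite part)$\times$(straight part) all at once, not by peeling off one simple reflection at a time. Your closing sentence effectively concedes this: if ``the full force of the minimal-length theory in \cite{HN}'' is needed to make the enlargement $K_0 \rightsquigarrow K$ work, then the induction on defect is not actually reducing the problem. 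As written, the proposal is a heuristic outline rather than a proof.
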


\subsection{} In the situation of Proposition \ref{rep}, we call $w y$ a standard representative of the cyclic-shift class of $\tw$. By \cite[Proposition 2.2]{He00}, $\bar \nu_{\tw}=\bar \nu_{w y}=\bar \nu_y$. The expression of standard representative relates each conjugacy class of $\tW$ with a straight conjugacy class. It plays an important role in the study of combinatorial properties of conjugacy classes of affine Weyl groups \cite{HN}, $\s$-conjugacy classes of $p$-adic groups \cite{He99} and representations of affine Hecke algebras with nonzero parameters \cite{CH}.

However, for a given cyclic-shift class in $\tW_{\min}$, the standard representatives are in general, not unique. This leads to some difficulty in understanding the cyclic-shift classes in $\tW_{\min}$ and their relations to the representations of $\tilde \ch_0$.

\subsection{} To overcome the difficulty, we introduce the standard pairs as follows.

Let $w y$ be a standard representative as above. Then the conjugation by $y$ sends simple reflections in $\supp(w)$ to simple reflections. Set $K=\cup_{i \in \NN} y^i \supp(w) y^{-i}$. It is easy to see that $K$ is the smallest subset of $S_{\aff}$ that $y K y \i=K$ and $y \in {}^K \tW {}^K$.

Set $J=J_{\bar \nu_y}$. Let $z \in {}^J W_0$ with $z(\nu_y)=\bar \nu_y$. Set $x=z y z \i$ and $\G=z K z \i$. Then $\G \subset J_{\aff}$ by noticing that $z C_0 \subset C_J$ (see \S 1.2 and \S1.5).

It is easy to see that $\nu_x=\bar \nu_y \in X^+_\QQ$, $\sharp W_{\G} < +\infty$ and $x \G x\i=\G$. We say that $(x, \G)$ is a standard pair associated to (the cyclic-shift class of) $\tw$.

\begin{rmk}
There might be more than one standard pairs associated to a given cyclic-shift class. However, we will see by Theorem \ref{char} that all these standard pairs are equivalent. Here we say two standard pairs $(x, \G)$ and $(x', \G')$ are equivalent if $x=x'$ and $\G'=\o \G \o\i$ for some $\o \in \Om_{J_{\nu_x}}$.
\end{rmk}
%We will see later in ? that the standard pair (up to conjugation) is uniquely determined by the cyclic-shift class, and does not depend on the choice of standard representatives.

\begin{lem}\label{wK}
Let $w y$ be a standard representative and $K=\cup_{i \in \NN} y^i \supp(w) y^{-i}$. Then for $n \gg 0$, $$T_{w y}^n=(-1)^{n \ell(w)-\ell(w_K)} T_{w_K y^n},$$ where $w_K$ is the maximal element in $W_K$.
\end{lem}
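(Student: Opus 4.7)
The plan is to pull all copies of $T_y$ to the right in $T_{wy}^n$, reducing the computation to the finite $0$-Hecke algebra of $W_K$. Let $\sigma$ denote conjugation by $y$, which by hypothesis permutes the simple reflections $K$ and hence preserves lengths in $W_K$. Since $y$ is straight and $y \in {}^K\tW^K$, one has the length additivity $\ell(u_1 y^m u_2) = \ell(u_1) + m\ell(y) + \ell(u_2)$ for $u_1, u_2 \in W_K$ and $m \ge 1$, together with $\ell(y^m) = m\ell(y)$. Translating these into $\tch_0$ gives $T_y^m = T_{y^m}$ and, for $u \in W_K$, the commutation $T_y T_u = T_{yu} = T_{\sigma(u) y} = T_{\sigma(u)} T_y$. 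Iterating inside $T_{wy}^n = (T_w T_y)^n$ yields
$$T_{wy}^n = T_w T_{\sigma(w)} T_{\sigma^2(w)} \cdots T_{\sigma^{n-1}(w)} \cdot T_{y^n},$$
so it suffices to show that the product of the $T_{\sigma^i(w)}$ equals $(-1)^{n\ell(w) - \ell(w_K)} T_{w_K}$ for $n \gg 0$; then $T_{w_K} T_{y^n} = T_{w_K y^n}$ by length additivity.

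For this inner computation I would use the Demazure product: in the finite $0$-Hecke algebra of $W_K$ one has $T_{u_1} T_{u_2} = (-1)^{\ell(u_1) + \ell(u_2) - \ell(u_1 * u_2)} T_{u_1 * u_2}$, where $u_1 * u_2$ denotes the Demazure product. Setting $u_n := w * \sigma(w) * \cdots * \sigma^{n-1}(w)$, the claim reduces to $u_n = w_K$ for all $n \gg 0$. The sequence $(u_n)$ is weakly increasing in Bruhat order and lies in the finite group $W_K$, so it stabilizes to some $u_\infty$, which means $u_\infty * \sigma^n(w) = u_\infty$ for every large $n$. A basic fact (proved from monotonicity of $*$ in its second argument together with $u * s > u$ for $s$ not a right descent of $u$) is that $u * v = u$ iff $\supp(v)$ is contained in the right descent set of $u$. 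Therefore $\sigma^n(\supp(w))$ is contained in the right descent set of $u_\infty$ for all large $n$; since $\sigma$ is a permutation of finite order on $K$, the union of these images recovers $K = \bigcup_{i \ge 0} \sigma^i(\supp(w))$. The right descent set of $u_\infty$ thus equals $K$, which forces $u_\infty = w_K$.

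Once $u_N = w_K$ for some $N$, for any $n \ge N$ one invokes the absorbing identity $T_{w_K} T_u = (-1)^{\ell(u)} T_{w_K}$ for $u \in W_K$ (immediate by induction on $\ell(u)$ from $T_{w_K} T_s = -T_{w_K}$ for $s \in K$) to extend the calculation:
$$T_w T_{\sigma(w)} \cdots T_{\sigma^{n-1}(w)} = (-1)^{N\ell(w) - \ell(w_K)} T_{w_K} \cdot T_{\sigma^N(w)} \cdots T_{\sigma^{n-1}(w)} = (-1)^{n\ell(w) - \ell(w_K)} T_{w_K},$$
which combined with the first paragraph gives the lemma. The main obstacle is the stabilization step in the middle paragraph: identifying $u_\infty$ as $w_K$, which rests on the descent-set characterization of fixed points of the Demazure operation together with the orbit structure of $\sigma$ on $K$.
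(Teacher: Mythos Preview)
Your proof is correct and follows the same overall architecture as the paper's: both arguments pull the copies of $T_y$ to the right using straightness of $y$ and the fact that conjugation by $y$ permutes $K$, reducing to a statement in the finite $0$-Hecke algebra of $W_K$; both then show the resulting product $T_w T_{\sigma(w)} \cdots T_{\sigma^{n-1}(w)}$ stabilizes at $(-1)^{n\ell(w)-\ell(w_K)} T_{w_K}$ and finish with the absorbing identity $T_{w_K} T_u = (-1)^{\ell(u)} T_{w_K}$.

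The genuine difference is in how the stabilization is established. The paper quotes \cite[Corollary~5.5]{HN0}, a result about minimal length elements in finite twisted Coxeter groups, to assert directly that the $m$-fold product (with $m$ the order of $w\delta$ in $W_K \rtimes \langle\delta\rangle$) already has the form $T_{w_K} T_{w_1}\cdots T_{w_l}$. Your route is more elementary and self-contained: you observe that the Demazure products $u_n = w * \sigma(w) * \cdots * \sigma^{n-1}(w)$ form a Bruhat-increasing chain in the finite group $W_K$, hence stabilize, and then identify the limit via the equivalence $u*v=u \iff \supp(v)\subseteq D_R(u)$ combined with the finite order of $\sigma$ on $K$. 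This avoids the external citation entirely at the cost of a short Demazure-product computation; the paper's approach is terser but leans on a nontrivial structural result. Both give the same exponent because the sign is determined purely by the total length of the factors absorbed into $T_{w_K}$.
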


\begin{rmk}
Note that $w_K y^n \neq (w y)^n$. However, we may regard $w_K y^n$ as the $n$-th Demazure product of $w y$.
\end{rmk}

\begin{proof}
Let $\d$ be the automorphism on $W_K$ induced by the conjugation action of $y$. Let $m$ be the order of the element $w \d$ in $W_K \rtimes \<\d\>$. By \cite[Corollary 5.5]{HN0}, $T_w \d(T_w) \cdots \d^{m-1}(T_w)=T_{w_K} T_{w_1} \cdots T_{w_l}$ for some $w_1, \cdots, w_l \in W_K$ with $\ell(w_1)+\cdots+\ell(w_l)=m \ell(w)-\ell(w_K)$. Thus for any $n \ge m$, \begin{align*} T_w \d(T_w) \cdots \d^{n-1}(T_w) &=T_{w_K} T_{w_1} \cdots T_{w_l} \d^m(T_w) \cdots \d^{n-1}(T_w) \\ &=(-1)^{\ell(w_1)+\ell(w_l)+(n-m)\ell(w)} T_{w_K} \\ &=(-1)^{n \ell(w)-\ell(w_K)} T_{w_K}.
\end{align*} Here the second equality follows from the definition of $0$-Hecke algebras (as $T_{w_K} T_x=(-1)^{\ell(x)} T_{w_K}$ for any $x \in W_K$). Since $y$ is a straight element,
$T_{w y}^n=T_w \d(T_w) \cdots \d^{n-1}(T_w) T_y^n=(-1)^{n \ell(w)-\ell(w_K)} T_{w_K} T_y^n=(-1)^{n \ell(w)-\ell(w_K)} T_{w_K y^n}.$ \end{proof}

\

The following result is a variation of the length formula in \cite{IM}.

\begin{lem}
For $w \in W_0$ and $\a \in R$, set $$\d_w(\a)=\begin{cases} 0, & \text{ if } w \a \in R^+; \\ 1, & \text{ if } w \a \in R^-. \end{cases}$$ Then for any $x, y \in W_0$ and $\mu \in X$, we have that $$\ell(x t^\mu y)=\sum_{\a \in R^+} |\<\mu, \a^\vee\>+\d_x(\a)-\d_{y \i}(\a)|.$$
\end{lem}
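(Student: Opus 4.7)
The plan is to use the alcove-geometric interpretation of the length function. Recall that for any $\tw \in \tW$,
$$\ell(\tw) = \#\{H \in \fH : H \text{ separates } C_0 \text{ from } \tw C_0\},$$
and since each hyperplane $H_{\a,k}$ coincides with $H_{-\a,-k}$, this count can be reorganized as
$$\ell(\tw) = \sum_{\a \in R^+} N_\a(\tw), \qquad N_\a(\tw) := \#\{k \in \ZZ : H_{\a,k} \text{ separates } C_0 \text{ from } \tw C_0\}.$$
It therefore suffices to verify, for each fixed $\a \in R^+$, the identity $N_\a(x t^\mu y) = |\<\mu, \a^\vee\> + \d_x(\a) - \d_{y\i}(\a)|$.

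To compute $N_\a(x t^\mu y)$, I would pick a very generic point $v$ in the open alcove $C_0$. From $(x t^\mu y)\cdot v = xyv + x\mu$ we obtain
$$\<(x t^\mu y)v, \a^\vee\> = \<v, (xy)\i\a^\vee\> + \<\mu, x\i\a^\vee\>.$$
For such generic $v$, the scalar $\<v, \a^\vee\>$ lies in $(0,1)$, while $\<v,(xy)\i\a^\vee\>$ lies in $(0,1)$ if $(xy)\i\a \in R^+$ and in $(-1,0)$ otherwise, and neither scalar is an integer. Setting $m = \<\mu, x\i\a^\vee\>$, the number of integers $k$ lying strictly between $\<v, \a^\vee\>$ and $\<(x t^\mu y)v, \a^\vee\>$ is $|m|$ in the first case (integers strictly between an interval in $(0,1)$ and one in $(m, m+1)$) and $|m-1|$ in the second (intervals $(0,1)$ and $(m-1,m)$).

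Finally, I would perform the change of variable $\b = x\i\a$ to rewrite this root-by-root count in the claimed form. The set $\{\a \in R^+\}$ splits according to whether $\d_x(\a) = 0$, which corresponds to $\b = x\i\a \in R^+$, or $\d_x(\a) = 1$, which corresponds to $\b = -\g$ for $\g \in R^+$ with $\d_x(\g) = 1$; in the second case $\<\mu, x\i\a^\vee\> = -\<\mu, \g^\vee\>$ and $(xy)\i\a = -y\i\g$. A $2\times 2$ case analysis on $(\d_x(\a), \d_{y\i}(\a)) \in \{0,1\}^2$ then shows that the count from the previous paragraph equals $|\<\mu, \a^\vee\> + \d_x(\a) - \d_{y\i}(\a)|$ in every case.

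The main obstacle is precisely this bookkeeping of signs in the last step: the contribution of an $\a \in R^+$ with $\d_x(\a) = 1$ arises through the flipped partner $-\g$, and one must track how the $-1$ shift of the interval $(m-1,m)$ interacts with the sign change $\<\mu,-\g^\vee\> = -\<\mu,\g^\vee\>$ so as to produce the $+\d_x(\a)$ correction inside the absolute value. Once the four cases are tabulated, the lemma follows by summing over $\a \in R^+$.
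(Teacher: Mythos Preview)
The paper does not supply a proof of this lemma; it simply records it as ``a variation of the length formula in \cite{IM}'' and moves on. Your alcove-counting derivation is the natural way to obtain it from scratch, and your computation of $N_\a$ in terms of $m=\<\mu,x\i\a^\vee\>$ and the sign of $(xy)\i\a$ is correct. Two points in the write-up need repair, however.

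First, the sentence ``It therefore suffices to verify, for each fixed $\a\in R^+$, the identity $N_\a(xt^\mu y)=|\<\mu,\a^\vee\>+\d_x(\a)-\d_{y\i}(\a)|$'' is false as a term-by-term statement. In type $A_2$ with $x=s_1$, $y=1$ your count gives $N_{\a_2}=|\<\mu,\a_1^\vee+\a_2^\vee\>|$, whereas the lemma's $\a_2$-summand is $|\<\mu,\a_2^\vee\>|$; the terms for $\a_2$ and $\a_1+\a_2$ are swapped. Only the \emph{sums} over $R^+$ agree, after the reindexing you go on to describe, so the reduction should be phrased as an equality of sums.

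Second, in that reindexing you split the original $\a\in R^+$ by whether $\d_x(\a)=0$ and identify this with $\b=x\i\a\in R^+$. But $\d_x(\a)=0$ means $x\a\in R^+$, which is \emph{not} equivalent to $x\i\a\in R^+$: for $x=s_1s_2$ in $A_2$ and $\a=\a_1$ one has $x\a_1=\a_2\in R^+$ yet $x\i\a_1=-(\a_1+\a_2)\in R^-$. The correct split of the original index is by $\d_{x\i}(\a)$, i.e.\ by the sign of $\b=x\i\a$. After this substitution $\b$ runs over $x\i(R^+)$; when $\b\in R^+$ one has $x\b=\a\in R^+$ so $\d_x(\b)=0$, and when $\b=-\g$ with $\g\in R^+$ one has $x\g=-\a\in R^-$ so $\d_x(\g)=1$. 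It is this \emph{new} index ($\b$ or $\g$) that carries the value of $\d_x$ appearing in the formula, and your computations $\<\mu,x\i\a^\vee\>=-\<\mu,\g^\vee\>$, $(xy)\i\a=-y\i\g$ then yield the $+\d_x$ correction exactly as you anticipate. With these two fixes the argument goes through.
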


\begin{prop}\label{u-x}
Let $(x, \G)$ be a standard pair. Then

(1) for $n \gg 0$ and $u \in {}^{J_{\nu_x}} W_0$, $\ell(u \i w_\G x^n  u)=\ell(w_\G x^n)$.

(2) for $n \gg 0$, $\ell(w_\G x^{n+n_0})=\ell(w_\G x^n)+\ell(x^{n_0})$, where $n_0=\sharp W_0$.

Here $w_\G \in W_\G \subset (W_J)_{\aff}$ is the unique element with maximal length with respect to $\ell_J$.
\end{prop}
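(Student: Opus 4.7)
The plan is to apply the length formula of Lemma 3.5 to a standard decomposition $w_\G x^n = t^{\mu_n} b_n$, and to exploit that for $n \gg 0$ the translation $\mu_n$ is very dominant on $R^+ \smallsetminus R_J$ (where $J = J_{\nu_x}$) while $b_n \in W_J$. First I would verify that $x \in \tW_J$: writing the straight element $y = t^\l v$ with $v \in W_0$, the equation $y^{n_0} = t^{n_0\nu_y}$ gives $n_0\nu_y = \sum_{i=0}^{n_0-1} v^i\l$, which is $v$-invariant, so $\nu_y$ is $v$-invariant; then $zvz\i$ fixes $\bar\nu_y = z\nu_y$, hence $zvz\i \in W_J$. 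Writing $w_\G = t^\x \s$ with $\x \in \ZZ R_J$ and $\s \in W_J$, and $x^n = t^{\mu'_n} v^n$ with $v^n \in W_J$, we obtain $w_\G x^n = t^{\mu_n} b_n$ where $\mu_n = \x + \s(\mu'_n) \in X$ and $b_n = \s v^n \in W_J$. Writing $n = kn_0 + r$ with $0 \le r < n_0$ and $\l_0 = n_0\nu_x$, one finds $\mu_n = \x + k\l_0 + \s(\eta_r)$ with $\eta_r$ bounded, and $\<\l_0,\a^\vee\> > 0$ for $\a \in R^+\smallsetminus R_J$; so for $n$ large, the absolute values in Lemma 3.5 attached to such $\a$ open with the $+$ sign, for both $w_\G x^n = 1\cdot t^{\mu_n} b_n$ and its conjugate $u\i w_\G x^n u = u\i\cdot t^{\mu_n}\cdot(b_n u)$.

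For (1), I would split $R^+ = (R^+ \smallsetminus R_J) \sqcup R_J^+$ in Lemma 3.5. On $R_J^+$: since $u \in {}^{J} W_0$, $u\i$ preserves signs of roots in $R_J$, and since $b_n\i\a \in R_J$, we obtain $\d_{u\i}(\a) = 0$ and $\d_{u\i b_n\i}(\a) = \d_{b_n\i}(\a)$, so the $R_J^+$-contributions to $\ell(w_\G x^n)$ and $\ell(u\i w_\G x^n u)$ agree termwise. On $R^+ \smallsetminus R_J$: after opening absolute values, the $\<\mu_n,\a^\vee\>$-parts match, and since $b_n \in W_J$ permutes $R^+ \smallsetminus R_J$, the substitution $\b = b_n\i\a$ yields
$$\sum_{\a \in R^+ \smallsetminus R_J}\bigl(\d_{u\i}(\a) - \d_{u\i b_n\i}(\a)\bigr) = 0,$$
so these contributions agree as well. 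This gives $\ell(u\i w_\G x^n u) = \ell(w_\G x^n)$.

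For (2), the $W_J$-invariance of $\l_0 = n_0\nu_x$ gives
$$w_\G x^{n+n_0} = t^{\mu_n} b_n t^{\l_0} = t^{\mu_n + b_n(\l_0)} b_n = t^{\mu_n + \l_0} b_n.$$
By Lemma 3.5 the $R_J^+$-contributions are unaffected ($\<\l_0,\a^\vee\> = 0$ there), while on $R^+ \smallsetminus R_J$ each summand grows by $\<\l_0,\a^\vee\>$. The total increment equals $\sum_{\a \in R^+\smallsetminus R_J}\<\l_0,\a^\vee\> = \<\l_0, 2\rho^\vee\> = \ell(t^{\l_0}) = \ell(x^{n_0})$, using $\<\l_0,\a^\vee\> = 0$ on $R_J$. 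The main technical step is the first paragraph — establishing $x \in \tW_J$ (so $b_n \in W_J$) together with the asymptotic dominance of $\mu_n$ on $R^+ \smallsetminus R_J$; once in hand, both (1) and (2) reduce to careful bookkeeping with Lemma 3.5.
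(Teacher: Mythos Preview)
Your proposal is correct and follows essentially the same approach as the paper: write $w_\G x^n = t^{\mu_n} b_n$ with $b_n \in W_J$, apply the Iwahori--Matsumoto length formula (Lemma~3.5), split $R^+$ into $R_J^+$ and $R^+ \smallsetminus R_J^+$, and use that $\mu_n$ is eventually dominant on the latter while $b_n \in W_J$ permutes it. The only cosmetic differences are that you spell out why $x \in \tW_J$ (the paper simply asserts $w \in W_J$), and you phrase the cancellation on $R^+ \smallsetminus R_J^+$ via the substitution $\b = b_n\i\a$ whereas the paper counts both $\d$-sums as $\ell(u)$; also watch the notation ``$x^n = t^{\mu'_n} v^n$'', where the Weyl part should be $(zvz\i)^n$ rather than $v^n$.
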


\begin{proof}
Set $J=J_{\nu_x}$. We have $w_\G x^n=t^\l w$ for some $\l \in X$ and $w \in W_J$. Since $\<\nu_x, \a^\vee\>>0$ for any $\a \in F_0 \smallsetminus J$, we have $\<\l, \a^\vee\>>0$ for any $\a \in R^+ \smallsetminus R^+_J$ as $n \gg 0$.

Notice that for $\a \in R_J$, $\d_{u \i(\a)}=\d_{\a}$. Now \begin{align*} & \ell(u \i w_\G x^n u)=\sum_{\a \in R^+} |\<\l, \a^\vee\>+\d_{u \i}(\a)-\d_{u \i w \i}(\a)| \\ &=\sum_{\a \in R^+_J} |\<\l, \a^\vee\>-\d_{w \i}(\a)|+\sum_{\a \in R^+ \smallsetminus R^+_J} |\<\l, \a^\vee\>+\d_{u \i}(\a)-\d_{u \i w \i}(\a)| \\ &=\sum_{\a \in R^+_J} |\<\l, \a^\vee\>-\d_{w \i}(\a)|+\sum_{\a \in R^+ \smallsetminus R^+_J} \bigl(\<\l, \a^\vee\>+\d_{u \i}(\a)-\d_{u \i w \i}(\a) \bigr) \\ &=\sum_{\a \in R^+_J} |\<\l, \a^\vee\>-\d_{w \i}(\a)|+\sum_{\a \in R^+ \smallsetminus R^+_J} \<\l, \a^\vee\>+\sharp\{\a \in R^+ \smallsetminus R^+_J, u \i(\a) \in R^-\} \\ & \qquad-\sharp\{\a \in R^+\smallsetminus R^+_J, u \i w \i(\a) \in R^-\} \\ &=\sum_{\a \in R^+_J} |\<\l, \a^\vee\>-\d_{w \i}(\a)|+\sum_{\a \in R^+ \smallsetminus R^+_J} \<\l, \a^\vee\>+\ell(u)-\ell(u) \\ &=\sum_{\a \in R^+_J} |\<\l, \a^\vee\>-\d_{w \i}(\a)|+\sum_{\a \in R^+ \smallsetminus R^+_J} \<\l, \a^\vee\>.
\end{align*}

This proves part (1).

For part (2), \begin{align*} & \ell(w_\G x^{n+n_0})=\sum_{\a \in R^+} |\<\l+n_0 \nu_x, \a^\vee\>-\d_{w \i}(\a)| \\ &=\sum_{\a \in R^+_J} |\<\l+n_0 \nu_x, \a^\vee\>-\d_{w \i}(\a)|+\sum_{\a \in R^+\smallsetminus R^+_J} \<\l+n_0 \nu_x, \a^\vee\> \\ &=\sum_{\a \in R^+_J} |\<\l, \a^\vee\>-\d_{w \i}(\a)|+\sum_{\a \in R^+\smallsetminus R^+_J} \<\l, \a^\vee\>+\sum_{\a \in R^+\smallsetminus R^+_J} \<n_0 \nu_x, \a^\vee\> \\ &=\ell(w_\G x^n)+\ell(x^{n_0}).
\end{align*}
\end{proof}

\

As a consequence, we have

\begin{cor}\label{xG}
Let $(x, \G)$ be a standard pair associated to the standard representative $w y$ and $K=\cup_{i \in \NN} y^i \supp(w) y^{-i}$. Then for $n \gg 0$, $w_\G x^n \tilde \approx w_K y^n$.
\end{cor}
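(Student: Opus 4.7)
The plan is to show that $w_\Gamma x^n$ and $w_K y^n$ are two minimum-length elements of a common conjugacy class of $\tW$, and then invoke the main theorem of \cite{HN} asserting that any two minimum-length elements of a conjugacy class are $\tilde\approx$-equivalent.

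First, I would observe that the two elements are conjugate in $\tW$. From the construction of the standard pair, $\Gamma = z K z\i$ and $x = z y z\i$, so $w_\Gamma = z w_K z\i$ (the longest element of $W_\Gamma$ is the conjugate of the longest element of $W_K$), and hence
\[
w_\Gamma x^n \;=\; z\,(w_K y^n)\, z\i.
\]

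Next, I would verify that $w_K y^n$ is itself a standard representative in the sense of Proposition \ref{rep}, with straight part $y^n$ and reflection data $K$. Straightness of $y^n$ is immediate from $\ell((y^n)^m)=mn\ell(y)=m\ell(y^n)$; the relation $y^n K y^{-n}=K$ is immediate from $yKy\i=K$; and $w_K\in W_K$ by definition. The only condition requiring care is $y^n\in {}^K\tW {}^K$, which I would extract from the length identity
\[
\ell(w_K y^n)=\ell(w_K)+n\ell(y),
\]
already implicit in the proof of Lemma \ref{wK} (where the factorization $T_{w_K y^n}=T_{w_K}T_y^n$ is used, and which forces $y^n$ to be of minimum length in $W_K y^n W_K$). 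Since $w_K y^n$ is thus a standard representative, it lies in $\tW_{\min}$, i.e.\ is of minimum length in its conjugacy class.

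To conclude that $w_\Gamma x^n$ also attains this minimum length, I would apply Proposition \ref{u-x}(1) with $u=z$: since $J_{\nu_x}=J_{\bar\nu_y}=J$ and $z\in {}^J W_0$, the proposition yields
\[
\ell(w_\Gamma x^n)=\ell(z\i w_\Gamma x^n z)=\ell(w_K y^n)
\]
for $n\gg 0$. Together with the previous step, this exhibits $w_\Gamma x^n$ as a minimum-length element of the conjugacy class of $w_K y^n$. Invoking the theorem from \cite{HN} that minimum-length elements of a conjugacy class of $\tW$ are all related by $\tilde\approx$ then gives $w_\Gamma x^n \tilde\approx w_K y^n$.

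The main obstacle I anticipate is the verification that $w_K y^n$ is a bona fide standard representative, specifically the condition $y^n \in {}^K\tW {}^K$. It can be established either by extracting the length identity above from the proof of Lemma \ref{wK}, or directly by induction on $n$ using the straightness of $y$ together with $yKy\i = K$ to bound $\ell(sy^n)$ and $\ell(y^n s)$ from below for $s \in K$.
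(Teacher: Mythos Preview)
Your final step contains a genuine gap. You invoke ``the theorem from \cite{HN} that minimum-length elements of a conjugacy class of $\tW$ are all related by $\tilde\approx$'', but no such theorem holds for affine Weyl groups. As recorded in \S\ref{2.1}, $\co_{\min}$ is in general only a \emph{union} of cyclic-shift classes, and the index set $\text{Cyc}(\tW_{\min})$ for the cocenter of $\tilde\ch_0$ is strictly larger than the set of conjugacy classes of $\tW$ (this is exactly the distinction drawn in the introduction between the cocenters of $\tilde\ch_0$ and $\tilde\ch_q$). What \cite{HN} proves is that minimal-length elements are related by \emph{strong conjugation}, a coarser equivalence than $\tilde\approx$. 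Thus knowing that $w_\G x^n$ and $w_K y^n$ are both minimal in their common conjugacy class does not, by itself, yield $w_\G x^n \tilde\approx w_K y^n$.

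The paper's argument sidesteps this entirely by applying Proposition~\ref{u-x}(1) not just to $u=z$ but to every prefix $z_i=s_1\cdots s_i$ of a reduced word $z=s_1\cdots s_k$. Each $z_i$ lies in ${}^J W_0$, so the proposition gives $\ell(z_i\i w_\G x^n z_i)=\ell(z_{i+1}\i w_\G x^n z_{i+1})$ for all $i$; this is precisely a chain of length-preserving conjugations by simple reflections, hence $w_\G x^n \tilde\approx z\i w_\G x^n z = w_K y^n$ directly from the definition of $\tilde\approx$. Your use of Proposition~\ref{u-x}(1) only at the endpoints recovers the equality of lengths but throws away the intermediate information needed to witness the cyclic-shift relation. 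Your step~2 (that $w_K y^n$ is minimal) is correct but, in light of the above, unnecessary.
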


\begin{proof}
Suppose that $z=s_1 \cdots s_k$ for $s_1, \cdots, s_k \in S_0$. Set $z_i=s_1 \cdots s_i$ for $1 \le i \le k$. Then $z_i \in {}^{J_{\nu_x}} W_0$. By Proposition \ref{u-x} (1), $\ell(z_i \i w_\G x^n z_i)=\ell(z_{i+1} \i w_\G x^n z_{i+1})$ for $0 \le i \le k-1$. Hence $z_i \i w_\G x^n z_i \tilde \approx z_{i+1} \i w_\G x^n z_{i+1}$ for $0 \le i \le k-1$. Therefore $w_\G x^n \tilde \approx z \i w_\G x^n z=w_K y^n$.
%Let $n_0=\sharp(W_0)$. Then $y^{n_0}=t^{n_0 \nu_y}$. We have $$\ell(z y^{n_0})=\ell(z t^{n_0 \nu_y})=\ell(t^{n \bar \nu_y} z \i)=\ell(t^{n \bar \nu_y})-\ell(z)=\ell(y^{n_0})-\ell(z).$$ Here the third equality follows from the fact that $n \bar \nu_y \in X^+(J)$ and for any $\l \in X^+(J)$ and $w \in {}^J W_0$, $\ell(t^\l w \i)=\ell(t^\l)-\ell(w)$.
%Since $y$ is a straight element, for any $n \ge n_0$, $\ell(y^n)=\ell(y^{n_0})+\ell(y^{n-n_0})$ and $w_K y^n=y^n w_K$ is of length $\ell(y^n)+\ell(w_K)$. Thus $$\ell(z y^n)=\ell(y^n)-\ell(z) \qquad \text{ and } \ell(z w_K y^n)=\ell(w_K y^n)-\ell(z).$$ On the other hand, $\ell(z y^n z \i) \le \ell(z y^n)+\ell(z)=\ell(y^n)$. Since $y^n$ is again a straight element and $z y^n z \i$ is conjugate to $y^n$, $\ell(y^n) \le \ell(z y^n z \i)$. Therefore $\ell(x^n)=\ell(z y^n z \i)=\ell(y^n)$ and $\ell(w_\G x^n)=\ell(z w_K y^n z\i)=\ell(w_K y^n)$. So $w_K y^n \tilde \approx z x_K y^n z \i=w_\G x^n$.
\end{proof}

\

Now we show that the character of $T_{\tw}$ for $\tw \in \tW_{\min}$ is determined by standard pairs associated to $\tw$.

\begin{prop}\label{power}
Let $\tw \in \tW_{\min}$ and $(x, \G)$ be a standard pair associated to $\tw$. Then for $n \gg 0$, $$Tr(T_{\tw}^n, \pi)=(-1)^{n \ell(\tw)-n \ell(x)-\ell(w_\G)} Tr(T_{w_\G x^n}, \pi)$$
for any $\pi \in R(\tilde \ch_0)_\kk$.
\end{prop}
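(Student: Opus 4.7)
The plan is to relate $Tr(T_{\tw}^n,\pi)$ to $Tr(T_{w_\G x^n},\pi)$ by three reductions: first to a standard representative $wy$ of the cyclic-shift class of $\tw$ (Proposition \ref{rep}); then to $T_{w_K y^n}$ via Lemma \ref{wK}; and finally to $T_{w_\G x^n}$ via the cyclic-shift equivalence $w_\G x^n\tilde\approx w_K y^n$ of Corollary \ref{xG}.

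The key preliminary claim is that $\tw\mapsto Tr(T_{\tw}^n,\pi)$ is constant on each cyclic-shift class in $\tW_{\min}$. It suffices to handle a single length-preserving cyclic shift $\tw\to s\tw s$ together with $\Omega$-conjugation, length preservation being automatic on $\tW_{\min}$ because $s\tw s$ is a conjugate of $\tw$. After possibly swapping $\tw$ and $s\tw s$, we may assume $s\tw<\tw<\tw s$, so that in $\tch_0$ we have $T_{\tw}=T_s T_{s\tw}$ and $T_{s\tw s}=T_{s\tw}T_s$; setting $A=T_s$ and $B=T_{s\tw}$, cyclicity of the trace on $\pi$ gives $Tr((AB)^n,\pi)=Tr((BA)^n,\pi)$, which is the desired equality. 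For $\tau\in\Omega$, $T_\tau$ is invertible and $T_{\tau\tw\tau^{-1}}^n=T_\tau T_{\tw}^n T_\tau^{-1}$, so cyclicity again applies.

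Replacing $\tw$ by $wy$ with $w\in W_K$, $y\in{}^K\tW^K$ straight and $yKy^{-1}=K$, Lemma \ref{wK} gives, for $n\gg 0$,
$$Tr(T_{\tw}^n,\pi)=(-1)^{n\ell(w)-\ell(w_K)}Tr(T_{w_K y^n},\pi).$$
By Corollary \ref{xG}, $w_K y^n$ and $w_\G x^n$ are cyclic-shift equivalent for $n\gg 0$, hence have the same image in $\overline{\tch_0}$ and the same trace against $\pi$. Thus $Tr(T_{\tw}^n,\pi)=(-1)^{n\ell(w)-\ell(w_K)}Tr(T_{w_\G x^n},\pi)$.

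It remains to verify the sign identity $n\ell(w)-\ell(w_K)\equiv n\ell(\tw)-n\ell(x)-\ell(w_\G)\pmod{2}$. Three observations suffice: $\ell(\tw)=\ell(wy)=\ell(w)+\ell(y)$ since $y\in{}^K\tW^K$; $\ell(x)=\ell(y)$ because both are straight with common $\bar\nu$ and $\ell(\cdot)=\langle\bar\nu,2\rho^\vee\rangle$ for straight elements; and $\ell(w_K)=\ell(w_\G)$, extracted by comparing $\ell(w_K y^n)=\ell(w_K)+n\ell(y)$ (from $y^n\in{}^K\tW^K$) with $\ell(w_\G x^n)=\ell(w_\G)+n\ell(x)$ valid for $n\gg 0$ (a direct consequence of the length formula in the proof of Proposition \ref{u-x} together with the dominance of $\nu_x$), using the length equality forced by Corollary \ref{xG}. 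The main obstacle is the preliminary cyclic-shift invariance of $Tr(T_{\tw}^n,\pi)$: unlike the case $n=1$, this does not follow from the equality of images in $\overline{\tch_0}$ alone, but is rescued by the specific $0$-Hecke factorization $T_{\tw}=T_sT_{s\tw}$, $T_{s\tw s}=T_{s\tw}T_s$ combined with the cyclicity of the trace on representations.
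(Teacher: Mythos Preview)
Your proof follows the same three-step reduction as the paper (pass to a standard representative $wy$, apply Lemma~\ref{wK}, then Corollary~\ref{xG}), and your explicit argument for the cyclic-shift invariance of $Tr(T_{\tw}^n,\pi)$ is a correct elaboration of what the paper records tersely as ``by definition'' (one small omission: when $s\tw>\tw$ and $\tw s>\tw$ with $\ell(s\tw s)=\ell(\tw)$, the standard Coxeter lemma forces $s\tw s=\tw$, so this case is trivial and your swapping reduction is justified).

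The one place where you work harder than necessary, and where your justification is incomplete, is the sign identity. You assert $\ell(x)=\ell(y)$ on the grounds that both are straight, but the straightness of $x$ is not established anywhere (it does not follow from $x=zyz^{-1}$ alone), and your claimed decomposition $\ell(w_\G x^n)=\ell(w_\G)+n\ell(x)$ is not an immediate consequence of Proposition~\ref{u-x}. The paper avoids this entirely: since $x=zyz^{-1}$ and $w_\G=zw_Kz^{-1}$ with $z\in W_0$, conjugation by each simple reflection changes length by $0$ or $\pm2$, so $\ell(x)\equiv\ell(y)$ and $\ell(w_K)\equiv\ell(w_\G)\pmod 2$. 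Only these parities are needed for the sign, and they come for free.
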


\begin{proof}
Let $w y$ be a standard representative of $\tw$. Then $w y \tilde \approx \tw$ and by definition, $T_{w y}^m-T_{\tw}^m \in [\tilde \ch_0, \tilde \ch_0]$ for any $m \ge 0$. By Lemma \ref{wK} and Lemma \ref{xG}, for $n \gg 0$, $$T_{w y}^n=(-1)^{n \ell(w)-\ell(w_K)} T_{w_K y^n} \in (-1)^{n \ell(w)-\ell(w_K)} T_{w_\G x^n}+[\tilde \ch_0, \tilde \ch_0].$$

Notice that $\ell(w)=\ell(\tw)-\ell(y) \equiv \ell(\tw)-\ell(x) \mod 2$ and $\ell(w_K) \equiv \ell(w_\G) \mod 2$. Thus $T_{\tw}^n \in (-1)^{n \ell(\tw)-n \ell(x)-\ell(w_\G)} T_{w_\G x^n}+[\tilde \ch_0, \tilde \ch_0]$.
\end{proof}

\begin{cor}\label{trace}
Let $\tw, \tw' \in \tW_{\min}$ such that there is a common standard pair associated to them. Then $Tr(T_{\tw}, \pi)=Tr(T_{\tw'}, \pi)$ for any $\pi \in R(\tilde \ch_0)_\kk$.
\end{cor}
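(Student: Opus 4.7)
The plan is to apply Proposition \ref{power} to both $\tw$ and $\tw'$ using their common standard pair $(x,\Gamma)$, and then pass from equality of the traces of large powers of $T_{\tw}$ and $T_{\tw'}$ down to equality of the traces themselves. Fix $\pi\in R(\tilde\ch_0)_\kk$. For all $n\gg 0$ Proposition \ref{power} yields
$$Tr(T_{\tw}^n,\pi)=(-1)^{n\ell(\tw)-n\ell(x)-\ell(w_\Gamma)}Tr(T_{w_\Gamma x^n},\pi),$$
$$Tr(T_{\tw'}^n,\pi)=(-1)^{n\ell(\tw')-n\ell(x)-\ell(w_\Gamma)}Tr(T_{w_\Gamma x^n},\pi).$$
Dividing out the common right-hand factor gives the symmetric relation $(-1)^{n\ell(\tw)}Tr(T_{\tw}^n,\pi)=(-1)^{n\ell(\tw')}Tr(T_{\tw'}^n,\pi)$ for $n\gg 0$.

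Next I would argue that $\ell(\tw)=\ell(\tw')$. A common standard pair encodes both the straight part of the conjugacy class (via $x$) and the twisting data needed to identify the ambient $\tW$-conjugacy class (via $\Gamma$, through the theory of \cite{HN}); consequently the cyclic-shift classes of $\tw$ and $\tw'$ lie in a single $\tW$-conjugacy class, and any two elements of $\tW_{\min}$ in a fixed conjugacy class share the same length. With the signs matched, the identity above collapses to $Tr(T_{\tw}^n,\pi)=Tr(T_{\tw'}^n,\pi)$ for all sufficiently large $n$.

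The final step is to pass from equality of these traces for $n\gg 0$ down to $n=1$. Regarding $T_{\tw}$ and $T_{\tw'}$ as linear operators on the underlying finite-dimensional $\kk$-vector space of $\pi$ and passing to $\bar\kk$ if necessary, I write their trace-power sums as $Tr(T_{\tw}^n,\pi)=\sum_i\lambda_i^n$ and $Tr(T_{\tw'}^n,\pi)=\sum_j\mu_j^n$, summed over the nonzero eigenvalues with their algebraic multiplicities (reduced mod $\mathrm{char}\,\kk$). Linear independence over $\kk$ of the power functions $n\mapsto\lambda^n$ for pairwise distinct nonzero $\lambda\in\bar\kk$—a Vandermonde argument valid in any characteristic—then forces the two multisets of nonzero eigenvalues to coincide. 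Summing yields $Tr(T_{\tw},\pi)=\sum_i\lambda_i=\sum_j\mu_j=Tr(T_{\tw'},\pi)$, and linearity extends the conclusion to virtual modules.

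The main obstacle is the parity of lengths in the middle step: Proposition \ref{power} controls traces only up to signs $(-1)^{\ell(\tw)}$ and $(-1)^{\ell(\tw')}$, so any parity mismatch would degrade the conclusion to $Tr(T_{\tw},\pi)=-Tr(T_{\tw'},\pi)$. Hence it is essential to know that a common standard pair forces $\tw$ and $\tw'$ into the same conjugacy class, an input ultimately drawn from the classification of conjugacy classes of extended affine Weyl groups in \cite{HN}; the eigenvalue extraction afterward is routine.
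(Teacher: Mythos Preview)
Your overall strategy---apply Proposition~\ref{power} to both elements and then descend from equality of traces of large powers to $n=1$ via an eigenvalue/Vandermonde argument---is exactly the paper's approach; the paper's proof is the two-line version of what you wrote.

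The gap is in your middle paragraph. You assert that a common standard pair forces $\tw$ and $\tw'$ into a single $\tW$-conjugacy class, whence $\ell(\tw)=\ell(\tw')$. This is false: the remark immediately following Corollary~\ref{trace} in the paper says explicitly that \emph{elements in different conjugacy classes may have the same standard pair}. The pair $(x,\Gamma)$ records the straight part and the set $K=\bigcup_i y^i\,\supp(w)\,y^{-i}$ (transported by $z$), but it does not retain the $\Ad(y)$-twisted conjugacy class of $w$ inside $W_K$; distinct such classes can produce the same $K$ and hence the same $\Gamma$. So your appeal to \cite{HN} to force conjugacy is misplaced, and with it your mechanism for matching $(-1)^{\ell(\tw)}$ with $(-1)^{\ell(\tw')}$ collapses. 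You correctly flag this as ``the main obstacle,'' but your proposed resolution does not survive.

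For comparison, the paper's own proof simply writes $Tr(T_{\tw}^n,\pi)=Tr(T_{\tw'}^n,\pi)$ for $n\gg 0$ and proceeds to the eigenvalue argument without isolating the parity step, so the point is handled tersely there as well; but whatever justifies the matching of signs, it is not that $\tw$ and $\tw'$ lie in the same conjugacy class.
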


\begin{rmk}
Notice that elements in different conjugacy classes may have the same standard pair.
\end{rmk}

\begin{proof}
By Proposition \ref{power}, $Tr(T_{\tw}^n, \pi)=Tr(T_{\tw'}^n, \pi)$ for $n \gg 0$. Thus the action of $T_{\tw}$ and $T_{\tw'}$ on $\pi$ have the same generalized eigenvalues with the same multiplications. Therefore $Tr(T_{\tw}, \pi)=Tr(T_{\tw'}, \pi)$.
\end{proof}

\section{Character formulas}

\subsection{} Let $M \in R(\tilde \ch_0)_\kk$. For any $J \subset F_0$, we set $M_J=\cap_{\l \in X^+(J)} T_{t^\l} M$. Since $M$ is a linear combination of finite dimensional vector spaces, there exists $\mu \in X^+(J)$ such that $M_J=T_{t^\mu} M$. Moreover, since the action of $T_{t^\l}$ on $M_J$ is invertible for any $\l \in X^+(J)$, we may regard $M_J$ as an $\tilde \ch_{J, 0}$-module. For $\G \subset J_{\aff}$, let $\Omega_J(\G)=\{\t \in \Omega_J; \t \G \t \i=\G\}$ and $M_{J, \G}=T^J_{w_\G} M_J$. Then $M_{J, \G}$ is an $\Omega_J(\G)$-module.

\begin{lem}\label{4.1}
Let $\tw \in \tW_{\min}$ with an associated standard pair $(x, \G)$ and $M \in R(\tilde \ch_0)_\kk$. Then for $n \gg 0$, $$Tr(T_{\tw}^n, M)=(-1)^{n \ell(\tw)-n \ell(x)} Tr((T_x^{J_{\nu_x}})^n, M_{J_{\nu_x}, \G}).$$

In particular, $Tr(T_{\tw}, M)=(-1)^{\ell(\tw)-\ell(x)} Tr(T_x^{J_{\nu_x}}, M_{J_{\nu_x}, \G})$.
\end{lem}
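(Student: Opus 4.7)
The plan is to combine Proposition \ref{power} with a computation in the parabolic algebra $\tch_{J, 0}$, where $J := J_{\nu_x}$. Proposition \ref{power} gives, for $n \gg 0$,
$$Tr(T_{\tw}^n, M) = (-1)^{n\ell(\tw) - n\ell(x) - \ell(w_\G)} Tr(T_{w_\G x^n}, M),$$
so it suffices to establish
$$Tr(T_{w_\G x^n}, M) = (-1)^{\ell(w_\G)} Tr((T^J_x)^n, M_{J, \G}) \qquad (n \gg 0). \quad (\ast)$$
The parities of $\ell$ and $\ell_J$ agree on the finite Coxeter subgroup $W_\G$, since both $(-1)^{\ell}$ and $(-1)^{\ell_J}$ restrict to the sign character (they send any reflection to $-1$), so $\ell(w_\G)$ and $\ell_J(w_\G)$ may be interchanged in signs throughout.

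To prove $(\ast)$ I transfer the computation to $\tch_{J, 0}$. For $n \gg 0$ the translation part of $w_\G x^n$ is very $J$-dominant (since $\nu_x$ is strictly dominant on $F_0 \smallsetminus J$ and the finite part of $w_\G$ fixes $\nu_x$), so $w_\G x^n \in \tW_J^+$, and $T_{w_\G x^n}$ equals the image of $T^J_{w_\G x^n} \in \tch_{J, 0}^+$ under the embedding. The same dominance forces $T_{w_\G x^n} M \subset M_J$ for $n \gg 0$, collapsing the trace on $M$ to one on $M_J$. A length-additivity computation in $\tW_J$ (in the spirit of Proposition \ref{u-x}) then yields the factorization $T^J_{w_\G x^n} = T^J_{w_\G}(T^J_x)^n$ as operators on $M_J$ for sufficiently large $n$. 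The heart of the argument is a $0$-Hecke projection: the longest-element identity $(T^J_{w_\G})^2 = (-1)^{\ell_J(w_\G)} T^J_{w_\G}$ makes $e := (-1)^{\ell_J(w_\G)} T^J_{w_\G}$ an idempotent on $M_J$ with image $M_{J, \G}$. The standard-pair condition $x \G x\i = \G$ gives $x w_\G x\i = w_\G$ in $\tW_J$, and length additivity yields $T^J_x T^J_{w_\G} = T^J_{w_\G} T^J_x$, so $T^J_x$ preserves the decomposition $M_J = M_{J, \G} \oplus \ker T^J_{w_\G}$, on which $T^J_{w_\G}$ acts respectively as $(-1)^{\ell_J(w_\G)}$ and as $0$. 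Consequently
$$Tr(T^J_{w_\G}(T^J_x)^n, M_J) = (-1)^{\ell_J(w_\G)} Tr((T^J_x)^n, M_{J, \G}),$$
establishing $(\ast)$.

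For the ``in particular'' statement at $n = 1$, I invoke the argument of Corollary \ref{trace}. Setting $\eta = (-1)^{\ell(\tw) - \ell(x)}$, the formula for large $n$ gives that the operators $\eta T_{\tw}$ on $M$ and $T^J_x$ on $M_{J, \G}$ have matching large-$n$ power-sum traces. Since $T^J_x|_{M_{J, \G}}$ is invertible (as $x^{n_0} = t^{n_0 \nu_x}$ lies in $\Omega_J(\G)$ and acts invertibly on $M_{J, \G}$), its generalized eigenvalues are nonzero and are determined by its large-$n$ power sums; they must equal the nonzero generalized eigenvalues of $\eta T_{\tw}$ on $M$, and since nilpotent generalized eigenvalues contribute $0$ to every positive power sum, the ordinary traces at $n = 1$ coincide. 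The main technical obstacle lies in the length-additivity identities $T^J_{w_\G x^n} = T^J_{w_\G}(T^J_x)^n$ and $T^J_x T^J_{w_\G} = T^J_{w_\G} T^J_x$ in $\tch_{J, 0}$; these are delicate because $x$ is not generally $\ell_J$-straight (indeed $\ell_J(x^{n_0}) = 0$), so the safest route is to verify them as operator identities on $M_J$ for $n$ sufficiently large, where the strictly-$J$-dominant translation of $x^n$ dominates any finite perturbations.
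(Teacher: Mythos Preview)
Your overall architecture matches the paper's exactly: reduce via Proposition~\ref{power} to $Tr(T_{w_\G x^n},M)$, pass to $M_J$ using $J$-positivity of $w_\G x^n$ for large $n$, then project onto $M_{J,\G}$ via the idempotent built from $T^J_{w_\G}$, and finally invoke the eigenvalue argument of Corollary~\ref{trace} for $n=1$. The paper carries out the reduction to $M_J$ a bit more explicitly, factoring $T_{w_\G x^{n+mn_0}}=T_{w_\G x^n}T_{t^{mn_0\nu_x-\mu}}T_{t^\mu}$ via Proposition~\ref{u-x}(2) and using $M_J=T_{t^\mu}M$; your version (``$T_{w_\G x^n}M\subset M_J$, collapsing the trace'') is the same idea.

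There is, however, a genuine confusion in your last paragraph. You treat the identities $T^J_{w_\G x^n}=T^J_{w_\G}(T^J_x)^n$ and $T^J_xT^J_{w_\G}=T^J_{w_\G}T^J_x$ as ``delicate'' and propose to salvage them as operator identities on $M_J$ by appealing to the ``strictly-$J$-dominant translation of $x^n$''. That reasoning cannot work: $\ell_J$ involves only hyperplanes $H_{\alpha,k}$ with $\alpha\in R_J$, and the translation part of $x^n$ is orthogonal to $R_J$ (since $J=J_{\nu_x}$), so no amount of growth in $n$ buys you $\ell_J$-additivity. What you are missing is much simpler: $x\in\Omega_J$, i.e.\ $\ell_J(x)=0$. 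This is implicit in the construction of standard pairs (the conjugation by $z\in{}^JW_0$ with $zC_0\subset C_J$ carries the straight element $y$ into the stabilizer of $C_J$); the paper uses it without comment when it writes $T^J_{w_\G x^{n+mn_0}}=T^J_{w_\G}(T^J_x)^{n+mn_0}=(T^J_x)^{n+mn_0}T^J_{w_\G}$, and it is used again throughout Theorem~\ref{char} and \S5.1, where $x$ is always treated as an element of $\Omega_J$. Once you note $\ell_J(x)=0$, both factorizations are immediate algebraic identities in $\tch_{J,0}$ (lengths add trivially, and $xw_\G=w_\G x$), and your worry that ``$x$ is not generally $\ell_J$-straight'' evaporates: $x$ is $\ell_J$-straight for the trivial reason that $\ell_J(x)=0$.
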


\begin{proof}
Set $J=J_{\nu_x}$. Let $\mu \in X^+(J)$ with $M_J=T_{t^\mu} M$. Notice that $n_0 \nu_x \in X^+(J)$, where $n_0=\sharp W_0$. There exists $m \in \NN$ such that $m n_0 \nu_x-\mu \in X^+(J)$. By Proposition \ref{u-x} (2), for $n \gg 0$, $\ell(w_\G x^{n+m n_0})=\ell(w_\G x^n)+\ell(t^{m n_0 \nu_x})=\ell(w_\G x^n)+\ell(t^{m n_0 \nu_x-\mu})+\ell(t^{\mu})$ and $$T_{w_\G x^{n+m n_0}}=T_{w_\G x^n} T_{t^{m n_0 \nu_x-\mu}} T_{t^{\mu}}.$$ Moreover, for $n \gg 0$, $w_\G x^{n+m n_0} \in \tW_J^+$ and $T_{w_\G x^{n+m n_0}}=T^J_{w_\G x^{n+m n_0}}$. Since $0 \to \ker(T_{t^{\mu}}: M \to M) \to M \to M_J \to 0$, we have \begin{align*} Tr(T_{w_\G x^{n+m n_0}}, M) &=Tr(T_{w_\G x^n} T_{t^{m n_0 \nu_x-\mu}} T_{t^{\mu}}, M_J)=Tr(T_{w_\G x^{n+m n_0}}, M_J) \\ &=Tr(T^J_{w_\G x^{n+m n_0}}, M_J).\end{align*}

Notice that $T^J_{w_\G x^{n+m n_0}}=T^J_{w_\G} (T^J_x)^{n+m n_0}=(T^J_x)^{n+m n_0} T^J_{w_\G}$. Since $0 \to \ker(T_{w_\G}: M_J \to M_J) \to M_J \to M_{J, \G} \to 0,$ we have $Tr(T^J_{w_\G x^{n+m n_0}}, M_J)=Tr((T^J_x)^{n+m n_0} T^J_{w_\G}, M_J)=(-1)^{\ell_J(w_\G)} Tr((T_x^J)^n, M_{J, \G})$.

By Proposition \ref{power}, $Tr(T_{\tw}^n, M)=(-1)^{n \ell(\tw)-n \ell(x)} Tr((T_x^{J_{\nu_x}})^n, M_{J, \G})$.

The ``in particular'' part follows from the proof of Corollary \ref{trace}.
\end{proof}

The following result is proved by Ollivier in \cite[Proposition 5.2]{O10}.

\begin{lem}
Let $J \subset F_0$ and $M \in R(\tilde \ch_{J, 0})_\kk$. Then $\tilde \ch_0 \otimes_{\tilde \ch_{J, 0}^+} M \cong \oplus_{d \in W_0^J} T_d \otimes M \cong \oplus_{d \in W_0^J} {}^\iota T_d \otimes M$ as vector spaces.
\end{lem}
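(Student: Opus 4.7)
The plan is to split the tensor product via the coset decomposition $W_0 = \sqcup_{d \in W_0^J} d W_J$, and to exploit the invertibility on $M$ of the elements $T_{t^\mu}^J$ for $\mu \in X^+(J)$---which holds precisely because $M$ is a module over the full $\tilde\ch_{J,0}$, not merely over its positive part $\tilde\ch_{J,0}^+$.

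The technical input I would prepare first is the length-additivity identity: for $d \in W_0^J$ and $\tu \in \tW_J^+$, we have $\ell(d \tu) = \ell(d) + \ell_J(\tu)$, and hence $T_d T_\tu^J = T_{d\tu}$ in $\tilde\ch_0$ under the embedding $\tilde\ch_{J,0}^+ \lto \tilde\ch_0$. This follows from the Iwahori-Matsumoto length formula by splitting the sum over $R^+$ into contributions from $R_J^+$ (where the positivity of $\tu$ controls the signs) and from $R^+ \smallsetminus R_J^+$ (where the characterisation $d(R_J^+) \subset R^+$ of $d \in W_0^J$ makes the remaining contributions add cleanly).

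For the spanning step, given $\tw \in \tW$ and $m \in M$, I would write $\tw = t^\l w$, decompose $w = d w_J$ with $d \in W_0^J$, and set $\tu := t^{d\i\l} w_J \in \tW_J$, so that $\tw = d \tu$. Choosing $\mu \in X^+(J)$ large enough that $t^\mu \tu \in \tW_J^+$ (and with $\mu$ fixed by $W_J$, so $t^\mu$ and $\tu$ commute), invertibility of $T_{t^\mu}^J$ on $M$ and the length-additivity identity above allow me to rewrite
\[
T_{\tw} \otimes m = T_{\tw} T_{t^\mu}^J \otimes (T_{t^\mu}^J)\i m = T_d \otimes T_{t^\mu \tu}^J (T_{t^\mu}^J)\i m \in T_d \otimes M.
\]
Hence $\sum_{d \in W_0^J} T_d \otimes M$ exhausts $\tilde\ch_0 \otimes_{\tilde\ch_{J,0}^+} M$. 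For directness, assuming $\sum_d T_d \otimes m_d = 0$, I would apply a common large $T_{t^\mu}^J$ to each $m_d$ to convert the relation to $\sum_d T_{d t^\mu} \otimes m_d' = 0$, with all $d t^\mu \in \tW_J^+$ and with the products length-additive; in this positive regime the standard basis vectors $T_{d t^\mu}$ generate a free right $\tilde\ch_{J,0}^+$-submodule of $\tilde\ch_0$, forcing each $m_d' = 0$ and hence each $m_d = 0$. The isomorphism with $\bigoplus {}^\iota T_d \otimes M$ then follows by transporting the argument through the involution $\iota$ of $\tilde\ch_0$.

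The main obstacle I expect is the directness step: $\tilde\ch_0$ is \emph{not} itself free as a right $\tilde\ch_{J,0}^+$-module on $\{T_d\}_{d \in W_0^J}$, because elements of $\tW_J$ with negative translation part cannot be absorbed into the positive subalgebra. One genuinely needs the $\tilde\ch_{J,0}$-structure on $M$ (rather than only the $\tilde\ch_{J,0}^+$-structure) to shift every relation into the $J$-positive chamber, where the length-additive product formula makes freeness transparent.
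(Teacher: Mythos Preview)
The paper does not give its own proof of this lemma; it is quoted from Ollivier \cite[Proposition~5.2]{O10}. Your overall strategy---exploiting the invertibility of $T_{t^\mu}^J$ on $M$ to shift into the $J$-positive cone---is the right idea and is essentially Ollivier's. However, the spanning step as you wrote it has a real gap.

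Your displayed chain of equalities requires $T_{\tw}T_{t^\mu}=T_{\tw t^\mu}$ in $\tilde \ch_0$, i.e.\ $\ell(\tw t^\mu)=\ell(\tw)+\ell(t^\mu)$, and this is false in general no matter how large $\mu\in X^+(J)$ is taken. For instance in type $A_1$ (simply connected) with $J=\emptyset$ and $\tw=t^{-\omega}$ (so $d=1$, $\tu=t^{-\omega}$), one has $\ell(t^{-\omega})+\ell(t^{n\omega})=1+n$ but $\ell(t^{(n-1)\omega})=n-1$ for every $n\ge 1$. A direct computation in $\tilde \ch_0$ gives $T_{t^{-\omega}}T_{t^{2\omega}}=-T_s T_{t^\omega}$, whence $T_{t^{-\omega}}\otimes m=-T_s\otimes (T^J_{t^\omega})^{-1}m$ lies in $T_s\otimes M$, not in $T_1\otimes M$ as your formula predicts. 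The product $T_{\tw}T_{t^\mu}$ is in general a genuine linear combination of several $T_{\tw'}$, and the decompositions $\tw'=d'\tu'$ may involve $d'\ne d$; so one must either analyse that full expansion, or argue differently (e.g.\ show that $\sum_d T_d\otimes M$ is stable under left multiplication by each $T_s$, $s\in S_{\aff}$). Two smaller issues: the length identity should read $\ell(d\tu)=\ell(d)+\ell(\tu)$ with the $\tW$-length on both sides, not $\ell_J(\tu)$; and in the directness step, freeness of the right $\tilde \ch_{J,0}^+$-submodule generated by $\{T_{dt^\mu}\}$ does not by itself force $m_d'=0$, since an inclusion of right modules need not stay injective after tensoring with $M$.
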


\begin{cor}
Let $J_1 \subset J_2 \subset F_0$ and $M \in R(\tilde \ch_{J_1, 0})_\kk$. Then $$\tilde \ch_0 \otimes_{\tilde \ch_{J_2, 0}^+} (\tilde \ch_{J_2, 0} \otimes_{\tilde \ch_{J_1, 0}^+} M) \cong \tilde \ch_0 \otimes_{\tilde \ch_{J_1, 0}^+} M.$$
\end{cor}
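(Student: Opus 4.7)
The plan is to construct the natural associativity map of induced modules from left to right and verify it is an isomorphism by comparing the vector-space decompositions furnished by the preceding lemma. First I would record the embedding $\tilde \ch_{J_1, 0}^+ \hookrightarrow \tilde \ch_{J_2, 0}^+$: if $\tw = t^\l w \in \tW_{J_1}^+$ with $\<\l, \a\> \ge 0$ for $\a \in R^+ \smallsetminus R_{J_1}$, then $w \in W_{J_1} \subset W_{J_2}$ gives $\tw \in \tW_{J_2}$, and since $R^+ \smallsetminus R_{J_2} \subset R^+ \smallsetminus R_{J_1}$ the positivity condition persists, so $\tw \in \tW_{J_2}^+$. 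Combined with $\tilde \ch_{J_2, 0}^+ \hookrightarrow \tilde \ch_0$, all tensor products in the statement are well defined, and the inner module $\tilde \ch_{J_2, 0} \otimes_{\tilde \ch_{J_1, 0}^+} M$ carries a natural $\tilde \ch_{J_2, 0}$-module structure on which $\tilde \ch_{J_2, 0}^+$ acts as a subalgebra.

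Next I would define the $\tilde \ch_0$-linear map
\[\phi \colon \tilde \ch_0 \otimes_{\tilde \ch_{J_1, 0}^+} M \; \longrightarrow \; \tilde \ch_0 \otimes_{\tilde \ch_{J_2, 0}^+} \bigl(\tilde \ch_{J_2, 0} \otimes_{\tilde \ch_{J_1, 0}^+} M \bigr), \qquad h \otimes m \; \mapsto \; h \otimes (1 \otimes m).\]
Well-definedness follows from the chain $hh' \otimes (1 \otimes m) = h \otimes (h' \otimes m) = h \otimes (1 \otimes h' m)$ for $h' \in \tilde \ch_{J_1, 0}^+ \subset \tilde \ch_{J_2, 0}^+$, where the first equality moves $h'$ across the outer tensor (using $h' \in \tilde \ch_{J_2, 0}^+$) and the second across the inner tensor (using $h' \in \tilde \ch_{J_1, 0}^+$). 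To prove $\phi$ is bijective, I would apply the preceding lemma three times: once with $(\fR, J_1)$ to write the source as $\oplus_{d \in W_0^{J_1}} T_d \otimes M$; once with $(\fR_{J_2}, J_1)$ to write $\tilde \ch_{J_2, 0} \otimes_{\tilde \ch_{J_1, 0}^+} M \cong \oplus_{e \in W_{J_2}^{J_1}} T_e^{J_2} \otimes M$; and once with $(\fR, J_2)$ to decompose the target as $\oplus_{d' \in W_0^{J_2},\, e \in W_{J_2}^{J_1}} T_{d'} \otimes T_e^{J_2} \otimes M$.

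The final step is the standard parabolic factorization $W_0^{J_1} \cong W_0^{J_2} \times W_{J_2}^{J_1}$ via $d \leftrightarrow (d', e)$ with $d = d'e$ and $\ell(d) = \ell(d') + \ell(e)$, combined with the identification $T_e^{J_2} \mapsto T_e$ for $e \in W_{J_2}$ under the embedding $\tilde \ch_{J_2, 0}^+ \hookrightarrow \tilde \ch_0$ (valid because $e = t^0 \cdot e \in \tW_{J_2}^+$). Together these yield $\phi(T_d \otimes m) = T_{d'} T_e \otimes (1 \otimes m) = T_{d'} \otimes (T_e^{J_2} \otimes m)$, exhibiting $\phi$ as a bijection between the two distinguished bases, hence an isomorphism. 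The main technical nuisance I anticipate is the length compatibility $\ell_{J_2}(e) = \ell(e)$ for $e \in W_{J_2}$, which ensures that $T_e$ and $T_e^{J_2}$ satisfy matching braid and quadratic relations so that the embedding genuinely carries $T_e^{J_2}$ to $T_e$; this is a standard fact about the parabolic subgroup $W_{J_2} \subset W_0$, but requires unwinding the geometric definitions of $\ell$ and $\ell_{J_2}$ from \S\ref{parabolic}. Once that is settled, the rest of the argument is pure bookkeeping.
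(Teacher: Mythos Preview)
Your argument is correct and is the standard proof of transitivity of induction; the paper itself records this corollary without proof, treating it as an immediate consequence of Ollivier's lemma, so your write-up is already more detailed than what the authors provide.

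There is one point you gloss over. When you apply the lemma to the root datum $\fR_{J_2}$ with Levi $J_1$, the positive monoid that appears is defined by the condition $\<\l,\a^\vee\>\ge 0$ for $\a\in R_{J_2}^+\smallsetminus R_{J_1}$ only; call the corresponding subalgebra $(\tilde\ch_{J_1,0})^{+,J_2}$. This is in general strictly larger than $\tilde\ch_{J_1,0}^+$, which requires positivity against all of $R^+\smallsetminus R_{J_1}$. So the lemma literally decomposes $\tilde\ch_{J_2,0}\otimes_{(\tilde\ch_{J_1,0})^{+,J_2}} M$, and you still owe the identification of this with $\tilde\ch_{J_2,0}\otimes_{\tilde\ch_{J_1,0}^+} M$. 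The fix is short: choose $\mu\in X^+(J_2)$, so that $t^\mu\in\Omega_{J_2}$ and $T^{J_2}_{t^\mu}$ is a unit in $\tilde\ch_{J_2,0}$; for any $\tw\in(\tW_{J_1})^{+,J_2}$ one has $t^{n\mu}\tw\in\tW_{J_1}^+$ for $n\gg 0$, and then
\[
T^{J_2}_{\tw}\otimes m=(T^{J_2}_{t^{n\mu}})^{-1}T^{J_2}_{t^{n\mu}\tw}\otimes m=(T^{J_2}_{t^{n\mu}})^{-1}\otimes T^{J_1}_{t^{n\mu}\tw}m=1\otimes T^{J_1}_{\tw}m,
\]
so the natural surjection between the two tensor products is an isomorphism. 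With this in hand, your basis-matching via $W_0^{J_1}\leftrightarrow W_0^{J_2}\times W_{J_2}^{J_1}$ goes through unchanged. (Your worry about $\ell_{J_2}(e)=\ell(e)$ for $e\in W_{J_2}$ is harmless: it is the standard fact that $W_{J_2}$ permutes $R^+\smallsetminus R_{J_2}^+$.)
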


\subsection{} \label{constr} Inspired by Lemma \ref{4.1},  we construct some representations of $\tilde \ch_0$.

For $J \subset F_0$ and $\G \subset J_{\aff}$, we set $\tilde \ch_{J, 0}(\G)=\ch_{J, 0} \rtimes \Omega_J(\G)$. This is the subalgebra of $\tch_{J, 0}$ generated by $T^J_s$ for $s \in J_{\aff}$ and $\Omega_J(\G)$.

Let $\chi \in \Omega_J(\G)^\vee=\Hom_\ZZ(\Omega_J(\G), \kk^\times)$. We extend $\chi$ as the $1$-dimensional $\tilde \ch_{J, 0}(\G)$-module, where $T^J_s$ acts by $-1$ if $s \in \G$ and by $0$ if $s \in J_{\aff} \smallsetminus \G$. Set $$\pi_{J, \G, \chi}=\tilde \ch_0 \otimes_{\tch_{J, 0}^+} (\tilde \ch_{J, 0} \otimes_{\tilde \ch_{J, 0}(\G)} \chi).$$

%\begin{rmk}
%Let ${}^\iota \pi_{J, \G, \chi}$ be the pullback of $\pi_{J, \G, \chi}$ via $\iota$. We conjecture that ${}^\iota \pi_{J, \G, \chi} = \pi_{J, J_{\aff}\setminus\G, \chi_{sign} \circ \chi} \in R(\tch_0)$, where $\chi_{sign}: T_{\tw} \mapsto (-1)^{\ell(\tw)}$ is the sign character.
%\end{rmk}

\subsection{} Let $\aleph=\{(J, \G); J \subset F_0, \G \subset J_{\aff}, \sharp W_\G < +\infty\}$. We define an equivalence relation $\sim$ and a partial order $<$ on $\aleph$ as follows. Let $(J, \G), (J', \G') \in \aleph$. We say that $(J, \G) \sim (J', \G')$ if $J=J'$ and $\G'=\t \G \t \i$ for some $\t \in \Omega_J$. We say that $(J, \G)<(J', \G')$ either $J \subsetneqq J'$ or $J=J'$ and $\G \supsetneqq \t \G' \t \i$ for some $\t \in \Omega_J$.

It is easy to see that for $(J, \G) \in \aleph$, $\chi \in \Omega_J(\G)^\vee$ and $\t \in \Omega_J$, we have $\Omega_J(\t \G \t \i)=\t \Omega_J(\G) \t \i$ and $\chi \circ \Ad(\t \i) \in \Omega_J(\t \G \t \i)^\vee$. Moreover, $\pi_{J, \G, \chi}$ and $\pi_{J, \G', \chi \circ \Ad(\t \i)}$ are isomorphic as $\tilde \ch_0$-modules.

\

%\begin{lem}
%Let $J \subset F_0$ and $M \in R(\tilde \ch_{J, 0})_\kk$. Let $\tw \in \tW_{\min}$ with a standard pair $(x, \G')$. Then $$Tr(T_{\tw}, \tilde \ch_0 \otimes_{\tilde \ch_{J, 0}^+} M)=Tr(T_{\tw}, \tilde \ch_0 \otimes_{\tilde \ch_{J, 0}^+} M_{J_{\nu_x}}).$$

%(2) If $J=J_{\nu_x}$, then for $n \gg 0$, $Tr(T_{\tw}^n, \tilde \ch_0 \otimes_{\tilde \ch_{J, 0}^+} M)=Tr(T^J_{w_\G x^n}, M)$.
%\end{lem}

%\begin{proof}
%Set $J'=J_{\nu x}$, $n_0=\sharp|W_0|$ and $\l=n_0 \nu_x$. By definition, for $s \in S_0$, $$T_{t^\l} T_s=\begin{cases} T_s T_{t^\l}, & \text{ if } s \in J'; \\ -T_{t^\l}, & \text{ if } s \notin J'. \end{cases}$$
%\end{proof}

The main result of this section is

\begin{thm}\label{char}
Let $(J, \G) \in \aleph$ and $\chi \in \Omega_J(\G)^\vee$. Let $\tw \in \tW_{\min}$ with associated standard pair $(x, \G')$. Then

(1) If $J \not \subset J_{\nu_x}$, then $Tr(T_{\tw}, \pi_{J, \G, \chi})=0$.

%(1) If $(J_{\nu_x}, \G')<(J, \G)$, then $Tr(T_{\tw}, \pi_{J, \G, \chi})=0$.

(2) If $J=J_{\nu_x}$ and $x \notin \Omega_J(\G)$, then $Tr(T_{\tw}, \pi_{J, \G, \chi})=0$.

(3) If $J=J_{\nu_x}$ and $x \in \Omega_J(\G)$, then $$Tr(T_{\tw}, \pi_{J, \G, \chi})=(-1)^{\ell(\tw)-\ell(x)} \chi(x) \sharp\{\t \in \Omega_J/\Omega_J(\G); \t \i \G' \t \subset \G\}.$$
\end{thm}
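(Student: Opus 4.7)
The plan is to apply Lemma~\ref{4.1} to reduce the computation of $Tr(T_{\tw}, \pi_{J, \G, \chi})$ to a trace computation on the subspace $(\pi_{J, \G, \chi})_{J'', \G'}$, where $J'' := J_{\nu_x}$, and then to analyze this subspace using Ollivier's decomposition of the induced module.

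By Lemma~\ref{4.1},
$$Tr(T_{\tw}, \pi_{J, \G, \chi}) = (-1)^{\ell(\tw) - \ell(x)} Tr(T^{J''}_x, (\pi_{J, \G, \chi})_{J'', \G'}),$$
so it suffices to describe $(\pi_{J, \G, \chi})_{J'', \G'}$ together with the action of $T^{J''}_x$. Combining Ollivier's lemma with the obvious decomposition of $N = \tilde \ch_{J, 0} \otimes_{\tilde \ch_{J, 0}(\G)} \chi$ as a right $\tilde \ch_{J, 0}(\G)$-module, we get a vector-space decomposition
$$\pi_{J, \G, \chi} \;\cong\; \bigoplus_{d \in W_0^J} \; \bigoplus_{\tau \in \Omega_J / \Omega_J(\G)} T_d T_\tau \otimes \chi$$
into $1$-dimensional pieces. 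The plan is to iterate $T_{t^\l}$ for $\l \in X^+(J'')$ to carve out $(\pi_{J, \G, \chi})_{J''}$, using the length formula of \S 3 and Proposition~\ref{u-x} to track lengths, and to exploit the non-invertibility of $T^J_{t^\mu}$ for $\mu \notin X^+(J)$ to kill the unstable summands.

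For case~(1), $J \not\subset J''$: for every $d \in W_0^J$ the element $d\i \l$ fails to lie in $X^+(J)$ when $\l \in X^+(J'')$ is deep enough, so iteration of $T_{t^\l}$ drives every $T_d T_\tau \otimes \chi$ into the kernel of some $T^J_{t^\mu}$ with $\mu \notin X^+(J)$, forcing $(\pi_{J, \G, \chi})_{J''} = 0$. For cases~(2)--(3), $J = J''$, and a length analysis shows that the stable subspace reduces to the $d = 1$ part, $(\pi_{J, \G, \chi})_{J''} \cong \bigoplus_\tau T_\tau \otimes \chi$, with $x \in \tW_J$ acting through its natural action on $N$. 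Applying $T^{J''}_{w_{\G'}}$ further restricts this: by the character-type action of $\tilde \ch_{J, 0}(\G)$ on $\chi$, the $\tau$-piece survives if and only if $\tau\i \G' \tau \subset \G$. On each surviving piece, $T^{J''}_x$ acts by $\chi(\tau\i x \tau)$; this is $0$ in case~(2), because $x \notin \Omega_J(\G)$ forces the action to factor through a $T^J_s$ with $s \in J_{\aff} \smallsetminus \G$, and equals $\chi(x)$ in case~(3), because the $\Omega_J(\G)$-invariance of $\chi$ under adjoint action on its own domain gives $\chi(\tau\i x \tau) = \chi(x)$. Summing over the surviving cosets yields the announced formula, the prefactor $(-1)^{\ell(\tw) - \ell(x)}$ coming from Lemma~\ref{4.1}.

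The main obstacle will be the length bookkeeping in the Ollivier decomposition: showing that only $d = 1$ survives iteration of $T_{t^\l}$ when $J = J''$, and that no $d$ does when $J \not\subset J''$. This requires combining Proposition~\ref{u-x} with the length formula for $\ell(x t^\mu y)$ to pinpoint when $T^J_{t^{d\i \l}}$ acts non-invertibly on the inner module $N$. Tracking the signs arising from $\ell_J(w_{\G'})$ and from the adjoint action of $\Omega_J$ on $\chi$ will also require care.
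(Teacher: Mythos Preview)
Your overall strategy matches the paper's: reduce via Lemma~\ref{4.1} to a trace on $(\pi_{J,\G,\chi})_{J'',\G'}$ and then analyse this space using Ollivier's decomposition of the induced module. The paper does exactly this for cases (2) and (3), and for case (1) it argues directly via Proposition~\ref{power} (which amounts to your claim that $(\pi_{J,\G,\chi})_{J''}=0$). One practical remark: the paper uses the ${}^\iota T_d$ basis rather than the $T_d$ basis, precisely because the commutation $T_{t^\l}\,{}^\iota T_d={}^\iota T_d\,T_{t^\l}$ for $d\in W_{J'}$ (and $=0$ otherwise) is clean; your proposed length-tracking with the $T_d$ basis would be considerably messier.

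There is, however, a genuine gap in your treatment of case (2). You assert that on each surviving line $T^J_\t\otimes v$ the operator $T^{J}_x$ acts by the scalar $\chi(\t^{-1}x\t)$, and that this scalar vanishes because ``$x\notin\Omega_J(\G)$ forces the action to factor through some $T^J_s$ with $s\in J_{\aff}\smallsetminus\G$''. This is incorrect on both counts. By the construction of the standard pair one has $x\in\Omega_J$ (it has $J$-length zero), so $T^J_x$ never factors through any $T^J_s$. Moreover $T^J_x$ does \emph{not} act diagonally on the basis $\{T^J_\t\otimes v:\t\in\Omega_J/\Omega_J(\G)\}$: one simply has $T^J_x(T^J_\t\otimes v)=T^J_{x\t}\otimes v$, and since $\Omega_J$ is abelian, the coset $x\t\,\Omega_J(\G)$ equals $\t\,\Omega_J(\G)$ if and only if $x\in\Omega_J(\G)$. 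Thus in case (2) the operator $T^J_x$ permutes these lines with no fixed points, and the trace vanishes for that reason---not because any scalar is zero (indeed $\t^{-1}x\t=x\notin\Omega_J(\G)$, so $\chi$ is not even defined there). Replace your diagonal-action picture by this permutation argument and the proof goes through.
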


\begin{proof}
Set $J'=J_{\nu_x}$ and $M=\tilde \ch_{J, 0} \otimes_{\tilde \ch_{J, 0}(\G)} \chi$. Then $\pi_{J, \G, \chi} \cong \oplus_{d \in W_0^J} {}^\iota T_d \otimes M$ as vector spaces and $M=\oplus_{\t \in \Omega_J/\Omega_J(\G)}\kk T_\t^J \otimes v$ for any nonzero vector $v$ in the $1$-dimensional representation $\chi$ of $\tilde \ch_{J, 0}(\G)$. By Proposition \ref{power}, to compute the character of $T_{\tw}$, it suffices to compute the character of $T_{w_\G x^n}$ for $n \gg 0$.

Let $n_1 \in \NN$ such that $n_1 \nu_x \in \ZZ R_J$. Set $\l=n_1 \nu_x$.

(1) We first consider the case where $J \not \subset J'$. By Proposition \ref{u-x} (2), for $n \gg 0$, $T_{w_{\G'} x^{n+n_1}}=T_{w_{\G'} x^n} T_{t^\l}$. By definition, for $s \in S_0$, $$T_{t^\l} {}^\iota T_s=\begin{cases} {}^\iota T_s T_{t^\l}, & \text{ if } s \in J'; \\ 0, & \text{ otherwise.} \end{cases}$$

Thus for $d \in W_0^J$, \[\tag{a} T_{t^\l} {}^\iota T_d=\begin{cases} {}^\iota T_d T_{t^\l}, & \text{ if } d \in W_{J'}; \\ 0, & \text{ otherwise.} \end{cases}\] Moreover, in $M$ we have $T_{t^\l} (T_\t^J \otimes v)=T^J_{t^\l} (T_\t^J \otimes v)=T_\t^J \otimes (T^J_{\t\i t^\l \t} v)$.

By definition, for $\tu \in (W_J)_{\aff} \rtimes \Om_J$, $T^J_{\tu} v \neq 0$ if and only if $\tu \in W_\G \rtimes \Omega_J(\G)$. In particular, $\<\nu_{\tu}, \a^\vee\>=0$ for any $\a \in R_J$. Note that $\nu_{\t\i t^\l \t}=w(\nu_{t^\l})=n_1 w(\nu_{x})$ for some $w \in W_J$. Since $J \not \subset J'$, there exists $\b \in R_J$ such that $\<\nu_{x}, \b^\vee\> \neq 0$. Therefore $\<\nu_{\t\i t^\l \t}, w(\b^\vee)\> \neq 0$ and $T^J_{\t\i t^\l \t} v=0$. Hence $Tr(T_{w_\G x^{n+n_1}}, \pi_{J, \G, \chi})=0$ for $n \gg 0$. By Proposition \ref{power}, $Tr(T_{\tw}^{n+n_1}, \pi_{J, \G, \chi})=0$ for $n \gg 0$. By Corollary \ref{trace}, $Tr(T_{\tw}, \pi_{J, \G, \chi})=0$.

(2) Now we consider the case where $J=J'$. By (a), for $n \gg 0$, $T_{w_{\G'} x^n} \pi_{J,\G,\chi} \subset M$. Applying Lemma \ref{4.1}, we have $Tr(T_{\tw}, \pi_{J, \G, \chi})=(-1)^{\ell(\tw)-\ell(x)} Tr(T^J_x, T_{w_{\G'}}^J M)$.
%$T_{w_\G x^{n+n_1}} (T_d \otimes M)=T_{w_\G x^n} T_{t^\l} (T_d \otimes M)=T_{w_\G x^n} T_u T_{t^\l} M$ for some $u \in W_J$. Notice that $w_\G x^n \in \tW_J^+$, $u \in W_J$ and $t^\l \in \tW_J^+$. Thus $T_{w_\G x^n} T_u T_{t^\l} \in \tilde \ch_{J, 0}^+$ and $T_{w_\G x^n} T_u T_{t^\l} M \subset M$.

We fix a representative for each coset $\Omega_J/\Omega_J(\G)$. Then $\{T_\t^J \otimes v; \t \in \Omega_J/\Omega_J(\G)\}$ is a basis of $M$. For $x \in \Omega_J$, the action of $T^J_x$ on $M$ permutes the lines $\kk(T_\t^J \otimes v)$ with $\t \in \Omega_J/\Omega_J(\G)$. Moreover, the action of $T^J_{w_{\G'}}$ stabilizes each line $\kk(T_\t^J \otimes v)$. If $x \notin \Omega_J(\G)$, then there is no line $\kk(T_\t^J \otimes v)$ stabilized by $T_x^J$ since $\Omega_J$ is abelian. Hence $Tr(T_{\tw}, \pi_{J, \G, \chi})=0$ in this case.

If $x \in \Omega_J(\G)$, then for any $\t \in \Omega_J/\Omega_J(\G)$, $T^J_x (T_\t^J \otimes v)=\chi(x) T_\t^J \otimes v$ and $$T_{w_{\G'}}^J (T_\t^J \otimes v)=T_{\t}^J \otimes T_{w_{\t \i \G' \t}}^J v=\begin{cases} (-1)^{\ell_J(w_{\G'})} T_{\t}^J \otimes v, & \text{ if } \t \i \G' \t \subset \G; \\ 0, & \text{ otherwise}. \end{cases}$$
Therefore, $\dim (T_{w_{\G'}}^J M)=\sharp \{\t \in \Omega_J/\Omega_J(\G); \t \i \G' \t \subset \G\}$ and
\begin{align*} Tr(T_{\tw}, \pi_{J, \G, \chi}) &=(-1)^{\ell(\tw)-\ell(x)} \chi(x) \dim (T_{w_{\G'}}^J M) \\ &=(-1)^{\ell(\tw)-\ell(x)} \chi(x) \sharp\{\t \in \Omega_J/\Omega_J(\G); \t \i \G' \t \subset \G\}.\end{align*} The proof is finished.
\end{proof}

\begin{cor}\label{nonzero}
Let $(J, \G) \in \aleph$ and $\chi \in \Omega_J(\G)^\vee$. Let $\tw \in \tW_{\min}$ with a standard pair $(x', \G')$. If $Tr(T_{\tw}, \pi_{J, \G, \chi}) \neq 0$, then $(J, \G) \le (J_{\nu_{x'}}, \G')$.
\end{cor}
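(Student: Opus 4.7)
The plan is to derive this as a direct consequence of Theorem \ref{char} by contrapositive: assume the trace is nonzero and rule out each way that $(J,\G) \not\le (J_{\nu_{x'}}, \G')$ could fail. There is essentially no hidden content beyond carefully matching the three cases of the theorem to the definition of the partial order on $\aleph$.

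First I would invoke part (1) of Theorem \ref{char} applied to the standard pair $(x',\G')$ of $\tw$. If $J \not\subset J_{\nu_{x'}}$, then $Tr(T_{\tw},\pi_{J,\G,\chi})=0$, contradicting our assumption. So $J \subset J_{\nu_{x'}}$. If this inclusion is strict, then by definition $(J,\G) < (J_{\nu_{x'}},\G')$ via the first clause of the partial order, and we are done.

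Next I would handle the remaining case $J = J_{\nu_{x'}}$. Part (2) of Theorem \ref{char} forces $x' \in \Omega_J(\G)$ (otherwise the trace is zero). So we land in case (3), which gives
\[
Tr(T_{\tw},\pi_{J,\G,\chi}) = (-1)^{\ell(\tw)-\ell(x')} \chi(x')\, \sharp\{\t \in \Omega_J/\Omega_J(\G);\ \t\i \G' \t \subset \G\}.
\]
Nonvanishing of this expression requires the cardinality to be positive, so there exists $\t \in \Omega_J$ with $\t\i \G' \t \subset \G$. Writing $\t' = \t\i \in \Omega_J$, this gives $\G \supseteq \t' \G' (\t')\i$.

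Finally I would split this containment into two subcases matching the definitions in \S 4.3: if $\G = \t' \G' (\t')\i$, then $(J,\G) \sim (J_{\nu_{x'}},\G')$; if $\G \supsetneqq \t' \G' (\t')\i$, then $(J,\G) < (J_{\nu_{x'}},\G')$ via the second clause of the order. Either way, $(J,\G) \le (J_{\nu_{x'}}, \G')$, completing the proof. There is no serious obstacle — the only place to be careful is in translating the condition $\t\i \G' \t \subset \G$ (written using one direction of conjugation in the theorem) into the form $\G \supseteq \tau \G' \tau\i$ used to define the order, which is just a matter of replacing $\t$ by its inverse.
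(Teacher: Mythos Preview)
Your argument is correct and is exactly the intended one: the paper states the corollary without proof because it is an immediate case-by-case reading of Theorem \ref{char} against the definition of $\le$ on $\aleph$ in \S4.3. The only subtlety, which you handled, is that Theorem \ref{char} says nothing about the case $J \subsetneqq J_{\nu_{x'}}$, but in that case the conclusion $(J,\G) < (J_{\nu_{x'}},\G')$ holds by the first clause of the order regardless of the trace value.
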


\section{Representations of $\tilde \ch_0$}

Now we prove Theorem \ref{main'}.

\subsection{} We first show that $\{\pi_{J, \G, \chi}; (J, \G) \in \aleph/\sim, \chi \in \Omega_J(\G)^\vee\}$ is linearly independent in $R(\tch_0)_\kk$.

Suppose that $\sum_{(J, \G, \chi)} a_{J, \G, \chi} \pi_{J, \G, \chi}=0$ for some $a_{J, \G, \chi} \in \ZZ$.

Let $(J_1, \G_1) \in \aleph/\sim$ be a minimal element such that $a_{J, \G, \chi} \neq 0$ for some $\chi \in \Omega_J(\G)^\vee$. Set $\Omega_J(\G)_+=\{x \in \Omega_J; \nu_x \in X^+_\QQ\}$. It is easy to see that $\Omega_J(\G)_+$ generates $\Omega_J(\G)$.

By Theorem \ref{char}, for any $n \gg 0$ and $x \in \Omega_{J_1}(\G_1)_+$, $$\sum_{(J, \G, \chi)} a_{J, \G, \chi} Tr(T_{w_{\G_1} x^n}, \pi_{J, \G, \chi})=\sum_{\chi \in \Omega_{J_1}(\G_1)^\vee} a_{J_1, \G_1, \chi} (-1)^{n\ell(\tw)-n\ell(x)} \chi^n(x)=0.$$

Therefore $\sum_{\chi \in \Omega_{J_1}(\G_1)^\vee} a_{J_1, \G_1, \chi} \chi(x)=0$. By Dedekind's lemma, $a_{J_1, \G_1, \chi}=0$ for all $\chi \in \Omega_{J_1}(\G_1)^\vee$. That is a contradiction. Hence $a_{J, \G, \chi}=0$ for all $(J, \G, \chi)$.

\subsection{} \label{span} Next we show that $\{\pi_{J, \G, \chi}; (J, \G) \in \aleph/\sim, \chi \in \Omega_J(\G)^\vee\}$ spans $R(\tilde \ch_0)_\kk$.

For any $M \in R(\tilde \ch_0)_\kk$, let $\aleph(M)$ be the set of pairs $(J_{\nu_x}, \G)$ in $\aleph/\sim$ such that $Tr(T_{\tw}, M) \neq 0$ for some $\tw \in \tW_{\min}$ with an associated standard pair  $(x, \G)$.

We argue by induction on minimal elements in $\aleph(M)$.

If $\aleph(M)=\emptyset$, then $Tr(T_{\tw}, M)=0$ for all $\tw \in \tW_{\min}$. By Theorem \ref{basis}, $Tr(h, M)=0$ for all $h \in \tilde \ch_0$. Hence $M=0$.

Now suppose that $\aleph(M) \neq \emptyset$. Let $(J, \G)$ be a minimal element in $\aleph(M)$. We regard $M_{J, \G}$ as a virtual $\Omega_J(\G)$-module. Therefore $M_{J, \G}=\sum_{\chi \in \Omega_J(\G)^\vee} a_\chi \chi$ for some $a_\chi \in \ZZ$. We write $U_{J, \G}$ for the $\tilde \ch_0$-module $\sum_{\chi \in \Omega_J(\G)^\vee} a_\chi \pi_{J, \G, \chi}$. By Lemma \ref{4.1} and Theorem \ref{char}, for any $\tw \in \tW_{\min}$ with an associated standard pair $(x, \G) \in \aleph / \sim$ such that $J_{\nu_x}=J$, we have
\[\tag{a} Tr(T_{\tw}, M)=(-1)^{\ell(\tw)-\ell(x)} Tr(x, M_{J, \G})=Tr(T_{\tw}, U_{J, \G}).\]

Let $(J_1, \G_1), \cdots, (J_r, \G_r)$ be the set of all minimal elements in $\aleph(M)$. Set $$M'=M-\sum_{i=1}^r U_{J_i, \G_i}.$$

By (a) and Corollary \ref{nonzero}, if $\tw' \in \tW_{\min}$, with an associated standard pair $(x', \G')$, satisfies $Tr(T_{\tw'}, M') \neq 0$, then $(J_{\nu_{x'}}, \G')>(J_i, \G_i)$ for some $i$. By inductive hypothesis, $M'$ is a linear combination of $\{\pi_{J, \G, \chi}; (J, \G) \in \aleph/\sim, \chi \in \Omega_J(\G)^\vee\}$. So $M$ is a linear combination of $\{\pi_{J, \G, \chi}; (J, \G) \in \aleph/\sim, \chi \in \Omega_J(\G)^\vee\}$.

\subsection{} Motivated by \cite{CH}, we introduce rigid modules of $\tilde \ch_0$. Recall that $T_{\Sigma}$ for $\Sigma \in \text{Cyc}(\tW_{\min})$, form a basis of $\overline{\tilde \ch_0}$.
Set \begin{gather*} \overline{\tilde \ch_0}^{rig}=\oplus_{\Sigma \in \text{Cyc}(\tW_{\min}), J_{\nu_\Sigma}=F_0} \ZZ T_{\Sigma}, \\ \overline{\tilde \ch_0}^{nrig}=\oplus_{\Sigma \in \text{Cyc}(\tW_{\min}), J_{\nu_\Sigma} \subsetneqq F_0} \ZZ T_{\Sigma}. \end{gather*}

We call $\overline{\tilde \ch_0}^{rig}$ the rigid part of the cocenter and $\overline{\tilde \ch_0}^{nrig}$ the non-rigid part of the cocenter.

Let $M \in R(\tilde \ch_0)_\kk$. We say $M$ is rigid if $Tr(\overline{\tilde \ch_0}^{nrig}, M)=0$.

\begin{prop} \label{rig}
Let $M \in R(\tilde \ch_0)_\kk$. Then $M$ is rigid if and only if $M \in \oplus_{(F_0, \G) \in \aleph / \sim, \chi \in \Om(\G)^\vee} \ZZ \pi_{F_0, \G, \chi}$.
\end{prop}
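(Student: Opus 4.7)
The plan is to combine the character formula from Theorem \ref{char} with the basis theorem Theorem \ref{main'}.

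For the ``if'' direction, by $\ZZ$-linearity of the rigidity condition it suffices to verify that every $\pi_{F_0, \G, \chi}$ is rigid. Fix $\Sigma \in \text{Cyc}(\tW_{\min})$ with $J_{\nu_\Sigma} \subsetneqq F_0$ and pick $\tw \in \Sigma$ with an associated standard pair $(x, \G')$. By construction $\nu_x = \bar\nu_{\tw}$, so $J_{\nu_x} = J_{\nu_\Sigma} \subsetneqq F_0$, and in particular $F_0 \not\subset J_{\nu_x}$. Theorem \ref{char}(1) applied with $J = F_0$ then yields $Tr(T_{\tw}, \pi_{F_0, \G, \chi}) = 0$, hence $Tr(T_\Sigma, \pi_{F_0, \G, \chi}) = 0$. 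Since the $T_\Sigma$ with $J_{\nu_\Sigma} \subsetneqq F_0$ span $\overline{\tilde \ch_0}^{nrig}$, the module $\pi_{F_0, \G, \chi}$ is rigid.

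For the ``only if'' direction, use Theorem \ref{main'} to write $M = \sum_{(J, \G, \chi)} a_{J, \G, \chi}\, \pi_{J, \G, \chi}$, and split $M = M_{F_0} + M'$, where $M_{F_0}$ collects the summands with $J = F_0$ and $M'$ collects the rest. The ``if'' direction shows $M_{F_0}$ is rigid; since $M$ is rigid by hypothesis, so is $M'$, and the task becomes to show $M' = 0$. Assume for contradiction that $M' \neq 0$, and choose $(J_1, \G_1) \in \aleph/\sim$ minimal for the partial order on $\aleph/\sim$ among pairs with $J \subsetneqq F_0$ and $a_{J_1, \G_1, \chi} \neq 0$ for some $\chi$.

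The main step is an adaptation of the minimal-element argument of Section 5.1. For $x \in \Omega_{J_1}(\G_1)_+$ and $n \gg 0$, consider the element $w_{\G_1} x^n$. Its image in $\overline{\tilde \ch_0}$ is (up to sign) $T_\Sigma$ for some cyclic-shift class $\Sigma$ with $\bar\nu_\Sigma = \nu_x$, so $J_{\nu_\Sigma} = J_1 \subsetneqq F_0$ and this image lies in $\overline{\tilde \ch_0}^{nrig}$. Rigidity of $M'$ therefore forces $Tr(T_{w_{\G_1} x^n}, M') = 0$. On the other side of the expansion, Corollary \ref{nonzero} shows that only pairs $(J, \G) \leq (J_1, \G_1)$ can contribute; combined with $J \subsetneqq F_0$ and the minimality of $(J_1, \G_1)$, only $(J, \G) = (J_1, \G_1)$ survives. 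Applying Proposition \ref{power} together with Theorem \ref{char}(3) (exactly as in Section 5.1) then reduces the identity to
\[
\sum_{\chi \in \Omega_{J_1}(\G_1)^\vee} a_{J_1, \G_1, \chi}\, \chi(x)^n = 0,
\]
valid for all $x \in \Omega_{J_1}(\G_1)_+$ and $n \gg 0$ (up to an overall sign). Dedekind's lemma then forces $a_{J_1, \G_1, \chi} = 0$ for every $\chi$, contradicting the choice of $(J_1, \G_1)$. Hence $M' = 0$ and $M$ belongs to the claimed submodule. The only mild subtlety is tracking the standard pair attached to the cyclic-shift class of $w_{\G_1} x^n$ and the resulting sign and $\chi(x)^n$ factors, but this is exactly the input already supplied by Proposition \ref{u-x}, Corollary \ref{xG} and the computation carried out in Section 5.1.
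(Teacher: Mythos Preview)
Your argument is essentially the paper's own: the ``if'' direction is exactly Theorem~\ref{char}, and for ``only if'' the paper likewise picks a minimal $(J',\G')$ with nonzero coefficient and runs the \S5.1 computation to exhibit a nonzero non-rigid trace when $J'\subsetneqq F_0$, only without first splitting off $M_{F_0}$. One point needs tightening: for arbitrary $x\in\Omega_{J_1}(\G_1)_+$ you only have $J_{\nu_\Sigma}=J_{\nu_x}\supseteq J_1$, not equality, so both your appeal to rigidity (which needs $J_{\nu_\Sigma}\subsetneqq F_0$) and the reduction via Corollary~\ref{nonzero} to the single term $(J_1,\G_1)$ require $J_{\nu_x}=J_1$; restrict to such $x$ --- for any $x\in\Omega_{J_1}(\G_1)_+$ and $\mu\in X^+(J_1)$ the product $xt^\mu$ qualifies --- observe that these still generate $\Omega_{J_1}(\G_1)$, and Dedekind then applies as before.
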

\begin{rmk}
By Clifford's theory, the $\tch_0$-modules $\pi_{F_0, \G, \chi}$ for $(F_0, \G) \in \aleph / \sim$ and $\chi \in \Om(\G)^\vee$ are distinct simple modules.

\end{rmk}

\begin{proof}
By Theorem \ref{char}, $\pi_{F_0, \G, \chi}$ is rigid. On the other hand, assume $M=\sum_{(J,\G) \in \aleph/\sim, \chi \in \Om_J(\G)^\vee} a_{J, \G, \chi} \pi_{J, \G, \chi}$ with each $a_{J, \G, \chi} \in \ZZ$. Let $(J', \G')$ be a minimal element such that $a_{J', \G', \chi'} \neq 0$ for some $\chi' \in \Om_{J'}(\G')^\vee$. By the same argument as in \S\ref{span}, we see that $Tr(T_{w_{\G'} t^\mu}, M) \neq 0$ for some $\mu \in X^+(J')$. Hence $M$ is nonrigid unless $J'=F_0$, that is, $M \in \oplus_{(F_0, \G) \in \aleph / \sim, \chi \in \Om(\G)^\vee} \ZZ \pi_{F_0, \G, \chi}$.

%Then by definition, $J=F_0$., there exists $M' \in R(\tilde \ch_0)_\kk$ such that $M-M' \in \oplus_{(F_0, \G') \in \aleph, \chi' \in \Omega_{F_0}(\G')^\vee} \ZZ \pi_{F_0, \G', \chi'}$ and for any $(J', \G') \in \aleph(M')$, there exists a minimal element $(J_1, \G_1) \in \aleph(M)$ with $(J', \G')>(J_1, \G')$. By inductive hypothesis, $M' \in \oplus_{(F_0, \G') \in \aleph, \chi' \in \Omega_{F_0}(\G')^\vee} \ZZ \pi_{F_0, \G', \chi'}$. Hence $M \in \oplus_{(F_0, \G') \in \aleph, \chi' \in \Omega_{F_0}(\G')^\vee} \ZZ \pi_{F_0, \G', \chi'}$.
\end{proof}

\subsection{} Let $\tw=w t^\l \in \tW$ with $\l \in X$ and $w \in W_0$. Write $\l=\mu_1-\mu_2$ with $\mu_1, \mu_2 \in X^+$. Following Vign\'{e}ras, we define $$E_{\tw}=q^{\frac{1}{2}(\ell(\mu_2)-\ell(\mu_1)-\ell(w)+\ell(\tw))} T_{w t^{\mu_1}}T_{t^{\mu_2}}\i \in \tch_q,$$ which dose not depend on the choices of $\mu_1$ and $\mu_2$. We still denote by $E_{\tw}$ its image in $\tch_0$. By \cite{V05}, the set $\{E_{\tw}; \tw \in \tW\}$ forms a basis of $\tch_0$.

\begin{lem}\label{bound}
Let $x, y \in \tW$ with $\ell(x) \le \ell(y)$. Then
\begin{gather*} q^{\frac{1}{2}(\ell(x)-\ell(y)+\ell(yx))} T_y T_{x\i}\i \in \bigl(\oplus_{z \in \tW, \ell(z) \ge \frac{1}{2}(\ell(y)-\ell(x)+\ell(yx))} \ZZ T_z \bigr)+q \ZZ[q] \tch_q, \\
q^{\frac{1}{2}(\ell(y)-\ell(x)+\ell(xy))} T_x T_{y\i}\i \in \bigl(\oplus_{z \in \tW, \ell(z) \ge \frac{1}{2}(\ell(y)-\ell(x)+\ell(xy))} \ZZ {}^\iota T_z \bigr)+q \ZZ[q] \tch_q.
\end{gather*}
\end{lem}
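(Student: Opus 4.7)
The plan is to prove both inclusions by induction on $\ell(x)$, using the identity $T_s^{-1} = q^{-1}T_s + q^{-1} - 1$ for $s \in S_{\aff}$ (equivalently ${}^\iota T_s = q-1-T_s$) to peel off one simple reflection at a time. The base case $\ell(x) = 0$ is immediate: then $x \in \Omega$, $T_{x^{-1}}^{-1} = T_x$, $T_y T_x = T_{yx}$ with $\ell(yx) = \ell(y)$, and both the required $q$-exponent ($0$) and length bound ($\ell(y)$) are met trivially; the analogous argument handles the base of the second inclusion via $T_{y^{-1}}^{-1} = (-q)^{-\ell(y)}\,{}^\iota T_y$.

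For the inductive step of the first inclusion, write $x = sx'$ with $s \in S_{\aff}$ and $\ell(x') = \ell(x) - 1$, so $T_{x^{-1}}^{-1} = T_s^{-1} T_{(x')^{-1}}^{-1}$, and compute
\begin{equation*}
T_y T_s^{-1} = \begin{cases} T_{ys}, & \ell(ys) < \ell(y), \\ q^{-1} T_{ys} + (q^{-1} - 1) T_y, & \ell(ys) > \ell(y). \end{cases}
\end{equation*}
In the length-decreasing case, $yx = (ys)x'$ and the identities $\ell(x') - \ell(ys) + \ell((ys)x') = \ell(x) - \ell(y) + \ell(yx)$ and $\ell(ys) - \ell(x') + \ell((ys)x') = \ell(y) - \ell(x) + \ell(yx)$ show that the induction hypothesis on $(ys, x')$ yields the desired inclusion with matching exponent and length bound. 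In the length-increasing case, the $q^{-1} T_{ys} T_{(x')^{-1}}^{-1}$ piece is handled by the induction hypothesis on $(ys, x')$, whose exponent drops by $1$ but whose length bound is raised by $1$ relative to the target; the correction $(q^{-1}-1) T_y T_{(x')^{-1}}^{-1}$ is fed into the induction on $(y, x')$, and the leftover $-T_y T_{(x')^{-1}}^{-1}$ is absorbed into $q\ZZ[q]\tch_q$ via the parity congruence $\ell(yx) \equiv \ell(y) + \ell(x) \pmod 2$.

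The second inclusion is proved by a parallel induction, peeling a simple reflection off $T_x$ (writing $T_x = T_s T_{x'}$ with $\ell(x') = \ell(x) - 1$) and using the multiplication rule
\begin{equation*}
T_s \cdot {}^\iota T_z = \begin{cases} -q\,{}^\iota T_{sz}, & \ell(sz) < \ell(z), \\ (q-1)\,{}^\iota T_z - {}^\iota T_{sz}, & \ell(sz) > \ell(z), \end{cases}
\end{equation*}
derived from ${}^\iota T_s = q-1-T_s$ and the fact that the ${}^\iota T$-basis inherits the braid and quadratic relations of the $T$-basis under the ring involution $\iota$. The main obstacle lies in the bookkeeping of $q$-powers and length bounds in the length-increasing subcases: one must verify that each $q^{-1}$ factor arising from $T_s^{-1}$ or $(q^{-1}-1)$ is compensated either by a tightened length bound from the inductive hypothesis or by a parity obstruction forcing the offending term into $q\ZZ[q]\tch_q$. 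Once this bookkeeping goes through, both inclusions follow.
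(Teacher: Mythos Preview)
Your overall strategy matches the paper's proof almost exactly: induct on $\ell(x)$, peel off a simple reflection $s$ with $sx<x$, and split according to whether $ys<y$ or $ys>y$. The first case and the $q^{-1}T_{ys}T_{(x')^{-1}}^{-1}$ piece in the second case are handled correctly, with the same bookkeeping as in the paper.

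The gap is in the treatment of the correction term $(q^{-1}-1)T_yT_{(x')^{-1}}^{-1}$ when $ys>y$. Applying the inductive hypothesis to the pair $(y,x')$ controls $q^{\frac{1}{2}(\ell(x')-\ell(y)+\ell(yx'))}T_yT_{(x')^{-1}}^{-1}$, where $yx'=ysx$. For your $q^{-1}$-piece to match this exponent you need $\ell(yx)-1\ge\ell(ysx)$, and for the length bound to be at least the target you need this inequality with equality (or else the entire contribution must fall into $q\ZZ[q]\tch_q$). The parity congruence $\ell(yx)\equiv\ell(y)+\ell(x)\pmod 2$ only tells you the exponents are integers; it does \emph{not} rule out $\ell(ysx)=\ell(yx)+1$, and in that scenario your $q^{-1}$-piece would carry a genuine $q^{-1}$ factor in front of a $\ZZ$-combination of $T_z$'s, breaking the conclusion. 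What is missing is the short Bruhat-order fact the paper supplies: if $\alpha$ is the simple root of $s$ and $\beta=x^{-1}(\alpha)$, then $sx<x$ forces $\beta<0$ while $ys>y$ forces $yx(\beta)=y(\alpha)>0$, whence $ysx=yx\,s_\beta<yx$ and therefore $\ell(ysx)\le\ell(yx)-1$. With this inequality inserted, your induction closes (splitting into the subcases $\ell(ysx)=\ell(yx)-1$ and $\ell(ysx)<\ell(yx)-1$ exactly as the paper does); without it, the bookkeeping does not go through.
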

\begin{proof}
We prove the first statement. The second one can be proved in the same way.

We argue by induction on $\ell(x)$. If $\ell(x)=0$, then statement is obvious. Assume $\ell(x) \ge 1$ and the statement holds for any $x'$ with $\ell(x') < \ell(x)$. Let $s \in S_{\aff}$ such that $s x < x$.

If $y s < y$, then $$q^{\frac{1}{2}(\ell(x)-\ell(y)+\ell(yx))} T_y T_{x\i}\i=q^{\frac{1}{2}(\ell(sx)-\ell(ys)+\ell(ys sx))}T_{ys} T_{(sx)\i}\i$$ and $\ell(y)-\ell(x)+\ell(yx)=\ell(ys)-\ell(sx)+\ell(yssx)$. The statement follows from induction hypothesis.

If $y s >y$, then
\begin{align*} q^{\frac{1}{2}(\ell(x)-\ell(y)+\ell(yx))} T_y T_{x\i}\i= &q^{\frac{1}{2}(\ell(sx)-\ell(ys)+\ell(yx))} T_{ys} T_{(sx)\i}\i \\ &+q^{\frac{1}{2}(\ell(sx)-\ell(y)+\ell(yx)-1)}(1-q) T_y T_{(sx)\i}\i .
\end{align*}

By inductive hypothesis, $$q^{\frac{1}{2}(\ell(sx)-\ell(ys)+\ell(yx))} T_{ys} T_{(sx)\i}\i \in \bigl(\oplus_{z \in \tW, \ell(z) \ge \frac{1}{2}(\ell(y)-\ell(x)+\ell(xy))} \ZZ T_z \bigr)+q \ZZ[q] \tch_q.$$

Let $\a$ be the simple root associated to $s$ and $\b=x\i(\a)$. Then $\b<0$ since $s x <x$ and $yx(\b)=y(\a)>0$ since $y s>y$. Hence $y s x= y x s_{\b}<y x$. Therefore, $\ell(y s x) \le \ell(y x)-1$ and $\ell(sx)-\ell(y)+\ell(yx)-1 \ge \ell(sx)-\ell(y)+\ell(ysx)$.

If $\ell(y s x)<\ell(y x)-1$, then $\ell(sx)-\ell(y)+\ell(yx)-1>\ell(sx)-\ell(y)+\ell(ysx)$ and by inductive hypothesis, $q^{\frac{1}{2}(\ell(sx)-\ell(y)+\ell(yx)-1)}(1-q) T_y T_{(sx)\i}\i \in q \ZZ[q] \tch_q$ and the statement holds in this case.

If $\ell(y s x)=\ell(y x)-1$, then $\ell(y)-\ell(x)+\ell(yx) = \ell(y)-\ell(sx)+\ell(ysx)$ and by inductive hypothesis, \begin{align*} q^{\frac{1}{2}(\ell(sx)-\ell(y)+\ell(yx)-1)}(1-q) T_y T_{(sx)\i}\i & \in \bigl(\oplus_{z \in \tW, \ell(z) \ge \frac{1}{2}(\ell(y)-\ell(sx)+\ell(y s x))} \ZZ T_z \bigr)+q \ZZ[q] \tch_q \\ &=\bigl(\oplus_{z \in \tW, \ell(z) \ge \frac{1}{2}(\ell(y)-\ell(x)+\ell(yx))} \ZZ T_z \bigr)+q \ZZ[q] \tch_q
\end{align*}
The statement also holds in this case.
\end{proof}

\begin{cor}\label{bound1}
Let $\G \subset S_{\aff}$ with $\sharp W_{\G}<\infty$ and $\tw \in \tW$ with $\ell(\tw)> 2 \sharp W_\G$. Then in $\tch_0$,

$E_{\tw} \in \oplus_{z \in \tW, \supp(z) \nsubseteq \G} \ZZ T_z$ or $E_{\tw} \in \oplus_{z \in \tW, \supp(z) \nsubseteq \G} \ZZ {}^{\iota} T_z$.
\end{cor}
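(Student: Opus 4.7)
My plan is to deduce Corollary \ref{bound1} from Lemma \ref{bound} applied directly to the defining expression of $E_{\tw}$. Writing $\tw=wt^\l$ with $\l=\mu_1-\mu_2$ and $\mu_1,\mu_2\in X^+$, and setting $y=wt^{\mu_1}$, $x=t^{\mu_2}$, the definition gives $E_{\tw}=q^c T_y T_{x\i}\i$ with $c=\tfrac{1}{2}(\ell(\mu_2)-\ell(\mu_1)-\ell(w)+\ell(\tw))$. The idea is to choose $\mu_1,\mu_2$ sufficiently deep in the dominant chamber so that the length formula preceding Proposition \ref{u-x} yields the reduced-product identities $\ell(y)=\ell(w)+\ell(\mu_1)$, $\ell(x)=\ell(\mu_2)$, and $\ell(yx)=\ell(w)+\ell(\mu_1)+\ell(\mu_2)$.

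Under these identities the exponent $c'=\tfrac{1}{2}(\ell(x)-\ell(y)+\ell(yx))$ and the length bound $M=\tfrac{1}{2}(\ell(y)-\ell(x)+\ell(yx))$ from the first statement of Lemma \ref{bound} simplify to $c'=\ell(\mu_2)$ and $M=\ell(w)+\ell(\mu_1)$. Arranging the decomposition so that $c=c'$ --- equivalently, $\ell(\tw)=\ell(w)+\ell(\mu_1)+\ell(\mu_2)$, i.e.\ the factorisation $\tw=yx\i$ is a reduced product --- and specialising $q=0$, the first statement of Lemma \ref{bound} gives
\[
E_{\tw}\in\bigoplus_{z\in\tW,\,\ell(z)\ge M}\ZZ T_z \qquad \text{in } \tch_0.
\]

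For such a choice one has $M=\ell(w)+\ell(\mu_1)=\ell(\tw)-\ell(\mu_2)$, and I can moreover ensure $\ell(\mu_2)\le\tfrac{1}{2}\ell(\tw)$ by placing the more dominant part of $\l$ into $\mu_1$; then $M\ge\tfrac{1}{2}\ell(\tw)>\sharp W_\G$ under the hypothesis $\ell(\tw)>2\sharp W_\G$. Since $\supp(z)\subseteq\G$ forces $z\in W_\G$ and hence $\ell(z)\le\ell(w_\G)<\sharp W_\G<M$, only $T_z$ with $\supp(z)\nsubseteq\G$ can contribute to the expansion of $E_{\tw}$ in $\tch_0$, giving the first alternative. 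The second alternative is obtained by the same strategy using the second statement of Lemma \ref{bound}, which is phrased in terms of $T_x T_{y\i}\i$ and places the surviving terms in the ${}^\iota T_z$-basis; one invokes it with the symmetric decomposition in which $\ell(\mu_1)\le\tfrac{1}{2}\ell(\tw)$ holds instead. The main obstacle I foresee is the exponent matching $c=c'$: this is essentially the requirement that $\tw=yx\i$ be a reduced product, which for $\tw$ of sufficient length can be arranged by a careful choice of $\mu_1,\mu_2$, but the delicate interplay between the dominant and antidominant parts of $\l$ is exactly what forces the dichotomy between the two alternatives in the conclusion.
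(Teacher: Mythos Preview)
Your overall strategy---apply Lemma~\ref{bound} directly to the expression defining $E_{\tw}$---is exactly the paper's strategy, but your setup contains a sign slip that creates a phantom obstacle. To match the form $T_y T_{x^{-1}}^{-1}$ in Lemma~\ref{bound} with the definition $E_{\tw}=q^c\,T_{wt^{\mu_1}}T_{t^{\mu_2}}^{-1}$, one must take $x^{-1}=t^{\mu_2}$, i.e.\ $x=t^{-\mu_2}$, not $x=t^{\mu_2}$. With the correct choice one has $yx=wt^{\mu_1-\mu_2}=\tw$, hence $\ell(yx)=\ell(\tw)$, and (using $\ell(wt^{\mu_1})=\ell(w)+\ell(\mu_1)$ for $\mu_1\in X^+$, which follows from the length formula and needs no ``sufficiently deep'' hypothesis) one finds $c'=\tfrac12(\ell(x)-\ell(y)+\ell(yx))=\tfrac12(\ell(\mu_2)-\ell(\mu_1)-\ell(w)+\ell(\tw))=c$ automatically. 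There is nothing to arrange.

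Your proposed remedy---forcing the reduced-product identity $\ell(\tw)=\ell(w)+\ell(\mu_1)+\ell(\mu_2)$---is in fact impossible in general: the left-hand side is fixed by $\tw$, while the right-hand side grows without bound as $\mu_1,\mu_2$ are pushed deeper into $X^+$. So that route cannot work. With the sign corrected, the length bound from Lemma~\ref{bound} is $M=\tfrac12(\ell(y)-\ell(x)+\ell(\tw))$, and the dichotomy is simply whether $\ell(x)\le\ell(y)$ or $\ell(y)\le\ell(x)$: in the first case $M\ge\tfrac12\ell(\tw)>\sharp W_\Gamma$ and the first statement of Lemma~\ref{bound} gives $E_{\tw}\in\bigoplus_{\ell(z)>\sharp W_\Gamma}\ZZ T_z$; in the second case one swaps the roles of $x$ and $y$ and the second statement gives $E_{\tw}\in\bigoplus_{\ell(z)>\sharp W_\Gamma}\ZZ\,{}^{\iota}T_z$. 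No special choice of $\mu_1,\mu_2$ is needed.
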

\begin{proof}
By definition, $E_{\tw}=q^{\frac{1}{2}(\ell(x)-\ell(y)+\ell(yx))} T_y T_{x\i}\i$ for some $x, y \in \tW$ such that $yx=\tw$. Applying Lemma \ref{bound}, we see that $E_{\tw} \in \oplus_{z \in \tW, \ell(z) > \sharp W_\G} \ZZ T_z$ if $\ell(x) \le \ell(y)$ and $E_{\tw} \in \oplus_{z \in \tW, \ell(z) > \sharp W_\G} \ZZ {}^{\iota} T_z$ if $\ell(y) \le \ell(x)$. The statement follows by noticing that $\supp(z) \nsubseteq \G$ if $\ell(z)>\sharp W_\G$.
\end{proof}

\

%Set $\overline{\tilde \ch_0}^{nss}=\overline{\tilde \ch_0}^{nrig}+\iota(\overline{\tilde \ch_0}^{nrig})$. We call $\overline{\tilde \ch_0}^{nss}$ the non-supersingular part of the cocenter.

\begin{prop}\label{5.4}
Let $M \in R(\tilde \ch_0)$. The following conditions are equivalent:

(1) $E_{\tw} M=0$ for $\tw \in \tW$ with $\ell(\tw) \gg 0$.

(2) $Tr(\overline{\tilde \ch_0}^{nss}, M)=0$, where $\overline{\tilde \ch_0}^{nss}=\overline{\tilde \ch_0}^{nrig}+\iota(\overline{\tilde \ch_0}^{nrig})$.

(3) $M \in \oplus_{(F_0, \G), (F_0, F_0 \smallsetminus \G) \in \aleph, \chi \in \Omega_{F_0}(\G)^\vee} \ZZ \pi_{F_0, \G, \chi}$.
\end{prop}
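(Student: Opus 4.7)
The plan is to establish (3) $\Leftrightarrow$ (2) and (3) $\Leftrightarrow$ (1), combining Theorem \ref{char}, Proposition \ref{rig}, Corollary \ref{bound1}, and a preliminary analysis of how the involution $\iota$ acts on the modules $\pi_{F_0, \G, \chi}$. The key preliminary observation is that since $\iota(T_s) = -T_s - 1$ for $s \in S_{\aff}$ and $\iota$ fixes $T_\tau$ for $\tau \in \Omega$, the $\iota$-twist of the one-dimensional character $\chi$ of $\tch_0(\G)$ produces a character in which $T_s$ acts by $0$ for $s \in \G$ and by $-1$ for $s \in S_{\aff} \smallsetminus \G$; identifying this with a standard character of the complementary subalgebra, one obtains an isomorphism $(\pi_{F_0, \G, \chi})^\iota \cong \pi_{F_0, S_{\aff} \smallsetminus \G, \chi^\iota}$ for a suitable $\chi^\iota$. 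The condition $(F_0, F_0 \smallsetminus \G) \in \aleph$ from (3) is exactly what ensures that this $\iota$-image is again a finite-dimensional module of the form in (3).

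For (3) $\Rightarrow$ (2), I would apply Theorem \ref{char} to $M = \pi_{F_0, \G, \chi}$: any standard pair $(x, \G')$ contributing to $Tr(T_{\tw}, M)$ must satisfy $J_{\nu_x} = F_0$, i.e., $\nu_x = 0$, so $T_{\Sigma_\tw} \in \overline{\tch_0}^{rig}$; hence $Tr(\overline{\tch_0}^{nrig}, M) = 0$. Applying the same to $M^\iota$ (which is of the form in (3) by the preliminary) and using $Tr({}^\iota h, M) = Tr(h, M^\iota)$ gives the vanishing on $\iota(\overline{\tch_0}^{nrig})$. For (2) $\Rightarrow$ (3), the vanishing on $\overline{\tch_0}^{nrig}$ together with Proposition \ref{rig} forces $M \in \bigoplus \ZZ \pi_{F_0, \G, \chi}$; the additional vanishing on $\iota(\overline{\tch_0}^{nrig})$ translates to $M^\iota$ being rigid, and combined with the preliminary identification this forces $(F_0, F_0 \smallsetminus \G) \in \aleph$ for each $\G$ that appears.

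For (1) $\Rightarrow$ (3), I would use the decomposition $M = \sum a_{J, \G, \chi} \pi_{J, \G, \chi}$ of \S\ref{span}. If some term with $J \subsetneq F_0$ appeared, pick $\mu \in X^+(J)$ in the strict interior; then for $n$ large, $E_{t^{n\mu}}$ coincides with $T_{t^{n\mu}}$ (no denominator is needed for dominant $\mu$) and acts invertibly on the submodule $(\pi_{J, \G, \chi})_J$ by construction of the latter, so $E_{t^{n\mu}}$ cannot annihilate $M$, contradicting (1). A parallel argument on the ${}^\iota T$-basis (applied to $M^\iota$) yields the complement constraint. The main obstacle is the remaining implication (3) $\Rightarrow$ (1): given $M = \pi_{F_0, \G, \chi}$ as in (3) and $\ell(\tw) > 2 \max(\sharp W_\G, \sharp W_{S_{\aff} \smallsetminus \G})$, Corollary \ref{bound1} applied with $\G$ and with $S_{\aff} \smallsetminus \G$ places $E_{\tw}$ in one of several candidate spans in the bases $\{T_z\}$ and $\{{}^\iota T_z\}$ with specific support restrictions. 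In each case I would verify annihilation on $M$ by using the explicit realization $M = \bigoplus_{\tau \in \Omega/\Omega(\G)} T_\tau \otimes v$, where $T_z (T_\tau \otimes v) = 0$ exactly when $\supp(z) \nsubseteq \tau \G \tau \i$, together with the analogous description of $M^\iota$ to handle the ${}^\iota T$-cases via the preliminary identification. The subtlety that the naive support condition is shifted under $\Omega$-conjugation is precisely what the ``doubled'' finiteness condition in (3) is designed to accommodate; combining the two applications of Corollary \ref{bound1} with the $\iota$-duality between $M$ and $M^\iota$ is what overcomes the obstacle.
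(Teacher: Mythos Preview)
Your treatment of (2) $\Leftrightarrow$ (3) and of (3) $\Rightarrow$ (1) is essentially the paper's argument (the paper runs the cycle (1) $\Rightarrow$ (2) $\Rightarrow$ (3) $\Rightarrow$ (1), but the content of those steps matches yours). One minor point: the $\Omega$-conjugation shift you flag in (3) $\Rightarrow$ (1) is not an actual obstacle, because the proof of Corollary~\ref{bound1} really gives the length bound $\ell(z) > \sharp W_\G$, and this forces $\supp(z) \nsubseteq \tau \G \tau^{-1}$ for \emph{every} $\tau \in \Omega$ at once; no separate bookkeeping for each coset is needed.

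There is, however, a genuine gap in your (1) $\Rightarrow$ (3). You expand $M = \sum a_{J,\G,\chi}\,\pi_{J,\G,\chi}$ in $R(\tch_0)_\kk$, suppose some coefficient with $J \subsetneq F_0$ is nonzero, and infer that since $E_{t^{n\mu}} = T_{t^{n\mu}}$ acts invertibly on $(\pi_{J,\G,\chi})_J$, it ``cannot annihilate $M$''. But an identity in the Grothendieck group controls only traces, not the action of an operator on the honest module $M$; there is no direct inference from a nonzero virtual coefficient to $E_{t^{n\mu}} M \neq 0$. To bridge this you must exhibit an element whose \emph{trace} on $M$ is nonzero and which factors through a long $E_{\tw}$. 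This is precisely the content of the paper's (1) $\Rightarrow$ (2): one writes $T_{w_{\G'} x^{m_0+r+kn_0}} = T_{w_{\G'} x^{m_0+r}}\, E_{x^{kn_0}}$ (and, for the $\iota$-half, ${}^\iota T_{x^{-kn_0}} = E_{x^{-kn_0}}$), so condition (1) forces $Tr(T_{w_{\G'} x^n}, M) = 0$ for $n \gg 0$, whence by Proposition~\ref{power} $Tr(T_{\tw}, M) = 0$ for every $\tw \in \tW_{\min}$ with $J_{\nu_{\tw}} \subsetneq F_0$. Once you are at the level of traces the linear-independence argument of \S\ref{span} (equivalently Proposition~\ref{rig}) finishes the job; but without this factorization step the implication (1) $\Rightarrow$ (3) is not established. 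Your ``parallel argument on $M^\iota$'' has the same issue and is likewise repaired by the ${}^\iota T$-version of the same factorization.
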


\begin{rmk}
Condition (1) is the one of the equivalent definitions of supersingular modules due to Ollivier \cite[Proposition 5.4]{O13} and Vign\'eras \cite[Definition 6.10]{V14}. The equivalence between (1) and (3) was also proved in \cite[Theorem 5.14]{O13} and in \cite[Theorem 6.18]{V14}.
\end{rmk}

\begin{proof}
(1) $\Rightarrow$ (2). Let $\tw \in \tW_{\min}$ such that $J_{\nu_{\tw}} \subsetneqq F_0$. Let $(x, \G)$ be a standard pair associated to $\tw$. Choose $n_0, m_0 \in \ZZ_{>0}$ such that $n_0 \nu_x \in X^+$ and $w_\G x^{m_0} \in \tW_J^+$. Then $\ell (w_\G x^{m_0+r+k n_0}) = \ell(w_\G x^{m_0+r})+ \ell(x^{k n_0})$ for $k \in \NN$ and $0 \le r \le n_0-1$. Thus $T_{w_\G x^{m_0+r+k n_0}}=T_{w_\G x^{m_0+r}} E_{x^{k n_0}}$. By assumption, we have $T_{w_\G x^n} M=0$ for $n \gg 0$. Applying Proposition \ref{power}, $Tr(T_{\tw}^n, M) = \pm Tr(T_{w_\G x^n}, M)=0$ and hence $Tr(T_{\tw}, M)=0$. The equality $Tr({}^\iota T_{\tw\i}, M)=0$ follows in a similar way by noticing that ${}^\iota T_{x^{-k n_0}}=E_{x^{-k n_0}}$ for $k \in \NN$.

(2) $\Rightarrow$ (3). By Proposition \ref{rig}, $M$ and its pullback ${}^\iota M$ via $\iota$ lie in the $\ZZ$-span of $\{\pi_{F_0, \G, \chi}; (F_0, \G) \in \aleph, \chi \in \Om(\G)^\vee\}$. By definition ${}^\iota \pi_{F_0, \G, \chi}=\pi_{F_0, S_{\aff} \smallsetminus \G, \chi}$. Thus $M$ also lies in the $\ZZ$-span of $\{\pi_{F_0, \G, \chi}; (F_0, \G \subset S_{\aff}) \in \aleph, \chi \in \Om(\G)^\vee\}$. Therefore, $M$ lies in the $\ZZ$-span of $\{\pi_{F_0, \G, \chi}; (F_0, \G), (F_0, F_0 \smallsetminus \G) \in \aleph, \chi \in \Om(\G)^\vee\}$.

(3) $\Rightarrow$ (1). Let $\G \subset S_{\aff}$. By definition, $T_x \pi_{F_0, \G, \chi}={}^{\iota} T_x \pi_{F_0, \G, \chi}=0$ for any $x \in \tW$ such that $\supp(x) \nsubseteq \G$ and $\supp(x) \nsubseteq S_{\aff} \smallsetminus \G$. Assume $\sharp W_\G, \sharp W_{S_{\aff} \smallsetminus \G} < +\infty$. Applying Corollary \ref{bound1}, $E_{\tw} \pi_{F_0, \G, \chi}=0$ for $\tw \in \tW$ with $\ell(\tw)> 2\sharp W_\G, 2\sharp W_{S_{\aff} \smallsetminus \G}$.
\end{proof}

\section*{Acknowledgement}
The first-named author was introduced to the representations of affine $0$-Hecke algebras by Marie-France Vign\'eras, who explained the beauty and importance of supersingular modules and encouraged the author to apply the method in \cite{CH} to the study of affine $0$-Hecke algebras. It is a great pleasure to thank her. The authors also would like to thank Dan Ciubotaru and George Lusztig for many useful discussions on affine Hecke algebras, and to thank Noriyuki Abe for sending us some lecture notes on modular Iwahori-Hecke algebras.


\begin{thebibliography}{AAA}
\bibitem{A}
N.~Abe, \emph{Mod $p$ parabolic induction for Pro-$p$-Iwahori Hecke algebra}, arXiv:1406.1003.

%\bibitem{AHHV}
%N.~Abe, G.~Henniart, F.~Herzig and M.-F. Vign\'{e}ras, \emph{A classification of irreducible admissible mod $p$ representations of $p$-adic reductive groups}, arXiv:1412.0737.

\bibitem{BK}
C. J. Bushnell and P. C. Kutzko, \emph{Smooth representations of reductive $p$-adic groups: structure theory via type}, Proc. London Math. Soc. 77 (3) (1998), 582--634.

\bibitem{CH}
D. Ciubotaru and X. He, \emph{Cocenters and representations of affine Hecke algebra},  arXiv:1409.0902.

%\bibitem{CL}
%R. W. Carter and G. Lusztig, \emph{Modular representations of finite groups of Lie type}, Proc. London Math. Soc. (3) \textbf{32} (1976), 347--385.

%\bibitem{GKP}
%M.~ Geck, S.~ Kim, and G.~ Pfeiffer, \emph{Minimal length elements in twisted conjugacy classes of finite {C}oxeter groups}, J. Algebra \textbf{229} (2000), no.~2, 570--600.

%\bibitem{GP93}
%M.~ Geck and G.~ Pfeiffer, \emph{On the irreducible characters of {H}ecke algebras}, Adv. Math. \textbf{102} (1993), no.~1, 79--94.

%\bibitem{GP00}
%\bysame, \emph{Characters of finite {C}oxeter groups and {I}wahori-{H}ecke algebras}, London Mathematical Society Monographs. New Series, vol.~21, The Clarendon Press Oxford University Press, New York, 2000.

%\bibitem{GR}
%M.~ Geck and R.~ Rouquier, \emph{Centers and simple modules for Iwahori--Hecke algebras}, Finite reductive groups ( Luminy, 1994), Progr. Math., 141, pp. 251--272, Birkh\"auser, Boston, 1997.

\bibitem{He05}
X.~ He,  \emph{Minimal length elements in some double cosets of {C}oxeter groups}, Adv. Math. \textbf{215} (2007), no.~2, 469--503.

%\bibitem{He08}
%X. He, {\em A subalgebra of $0$-Hecke algebra}, J. Algebra 322 (2009), 4030--4039.

\bibitem{He99}
\bysame, \emph{Geometric and homological properties of affine Deligne-Lusztig varieties}, Ann. Math. 179 (2014), 367--404.

\bibitem{He00}
\bysame, \emph{Minimal length elements of extended affine Weyl group, I},
arXiv:1004.4040, preprint.

\bibitem{He15}
\bysame, \emph{Centers and cocenters of $0$-Hecke algebras}, preprint.

\bibitem{HN0}
X. He and S. Nie, \emph{Minimal length elements of finite Coxeter group}, Duke Math. J. 161 (2012), 2945--2967.

\bibitem{HN}
\bysame, \emph{Minimal length elements of extended affine Weyl group}, Compos. Math. 150 (2014), 1903--1927.

\bibitem{IM}
N.~Iwahori and H.~Matsumoto, \emph{On some {B}ruhat decomposition and the structure of the {H}ecke rings of p-adic {C}hevalley groups}, Inst. Hautes \'Etudes Sci. Publ. Math. (1965), no.~25, 5--48.

\bibitem{KL}
D.~Kazhdan, G.~Lusztig,
\emph{Proof of the Deligne-Langlands conjecture for Hecke algebras}, Invent.
Math. {\bf 87} (1987), no. 1, 153--215.

\bibitem{No}
P. N. Norton, {\em $0$-Hecke algebras}, J. Austral. Math. Soc. Ser. A 27 (1979), no. 3, 337--357.

\bibitem{O10}
R. Ollivier, \emph{Parabolic Induction and Hecke modules in characteristic $p$ for $p$-adic $GL_n$},  Algebra and Number Theory 4 (2010), 701--742.

\bibitem{O13}
\bysame, \emph{Compatibility between Satake and Bernstein-type isomorphisms in characteristic $p$}, Algebra and Number Theory 8 (2014), 1071--1111.

\bibitem{R}
M.~Reeder, \emph{Isogenies of Hecke algebras and a Langlands correspondence for ramified principal series representations},
Represent. Theory 6 (2002), 101--126.

\bibitem{V98}
M.-F. Vign\'{e}ras, \emph{Induced $R$-represenations of $p$-adic reductive groups}, Selecta Math. (N.S.) 4 (4) (1998), 549--623.

\bibitem{V05}
\bysame, \emph{Pro-$p$-Iwahori Hecke ring and supersingular $\bar \FF_p$-representations}, Math. Ann. 331 (3) (2005), 523--556. Erratum in 333 (2), 699--701.

\bibitem{V14-1}
\bysame, \emph{The pro-$p$-Iwahori-Hecke algebra of a reductive $p$-adic group I}, preprint.

\bibitem{V14-2}
\bysame, \emph{The pro-$p$-Iwahori-Hecke algebra of a reductive $p$-adic group II}, preprint.

\bibitem{V14}
\bysame, \emph{The pro-$p$-Iwahori-Hecke algebra of a reductive $p$-adic group III}, preprint.


\end{thebibliography}
\end{document}